\newtheorem{thm}{Theorem}[section]
\newtheorem{lem}[thm]{Lemma}
\newtheorem{prop}[thm]{Proposition}
\theoremstyle{definition}
\newtheorem{rem}[thm]{Remark}
\newtheorem{cor}[thm]{Corollary}
\newcommand{\blackboard}[1]{\ensuremath{\mathbb{#1}}}
\newcommand{\Z}{\blackboard{Z}}
\begin{document}

\title{Girth Alternative for HNN Extensions}
\author{Azer Akhmedov, Pratyush Mishra}

\address{Department of Mathematics
\\North Dakota State University\\}

\email{azer.akhmedov@ndsu.edu, pratyush.mishra@ndsu.edu}

%\keywords{keyword 1, keyword 2}

\maketitle

\begin{abstract}
    We prove the Girth Alternative for a sub-class of HNN extensions as well as for a sub-class of amalgamated free products of finitely generated groups, and indicate counterexamples to show that beyond our class, the alternative fails in general.We also prove the Girth Alternative for HNN extensions of non-elementary word hyperbolic groups.
\end{abstract}

\maketitle
\section{introduction}

Let $G=\langle S | R \rangle$ be a finitely generated group and let $A$ and $B$ be two subgroups of $G$ with isomorphism $\phi: A\to B$. The \textit{HNN extension group of G relative to subgroups A and B} with stable letter $t$, denoted by $(G,A,B,t)$ is the extended group containing $G$ defined as $$G^{\ast}_{\phi}=(G,A,B,t)=\langle G, t| t^{-1}at=\phi(a) \hspace{0.1cm} \text{for} \hspace{0.1cm} a\in A\rangle$$ where $A$ and $B$ are not only isomorphic but also conjugate via the map $\phi$. The HNN extensions originally arose in topology as the fundamental groups of a topological space when two subspaces are glued along a homeomorphism. \footnote{In this case one obtains a somewhat more general notion where $A$ and $B$ are not necessarily subgroups of $G$ but have homomorphic images in $G$.} In recent decades, HNN extensions have been used as a popular tool to construct examples and counterexamples of groups for questions in combinatorial group theory. 
 \medskip
 
In \cite{S}, Schleimer defined the girth of $G$ with respect to a finite generating set $S$, denoted by $girth(G,S)$ as the length of the shortest non-trivial relation in $G$ with respect to generating set $S$. And the girth of the group $G$ is defined as $$girth(G)=\sup_{S\subset G} \{girth(G,S)| \langle S \rangle =G, |S| < \infty \}$$

For a given group $G$, it is natural to ask whether the $girth(G)$ is finite or infinite. In \cite{S}, \cite{azer1}, it is shown that finitely generated groups satisfying a law, which are not isomorphic to $\mathbb{Z}$, have finite girth. Moreover, in \cite{azer2}, the author shows that for many classes of groups (word hyperbolic, one-relator, linear groups not isomorphic to $\mathbb{Z}$), the property of having infinite girth coincides with the property of containing a non-abelian free subgroup and introduced the notion of Girth Alternative similar in spirit to the well-known Tits Alternative. \textit {For a given class $\mathcal{C}$ of finitely generated groups, $\mathcal{C}$ is said to satisfy the \textit{Girth Alternative} if any group from the class $\mathcal{C}$ has either infinite girth or is virtually solvable.}

\medskip
 
In \cite{azer2}, \cite{azer3}, Akhmedov has proved the Girth Alternative for the class of hyperbolic, linear, one-relator and $PL_+(I)$ groups. In \cite{Y}, Yamagata proves the Girth Alternative for convergence groups and irreducible subgroups of the mapping class groups. Independently in \cite{N}, Nakamura proves the Alternative for all subgroups of mapping class groups and also for the subgroups of Out$(\mathbb{F}_n)$ containing the irreducible elements having irreducible powers.

\medskip 

The table below shows a dichotomy between Girth Alternative and Tits Alternative for some classes of finitely generated groups that we are interested in.    
\medskip

\begin{tabularx}{1.0\textwidth} { 
  | >{\raggedright\arraybackslash}X 
  | >{\centering\arraybackslash}X 
  | >{\raggedleft\arraybackslash}X | }
\hline
\textbf{Groups} & \textbf{Tits Alternative} & \textbf{Girth Alternative}\\
\hline
PL$_+$(I) & fails (Thompson's group F) & holds\\
\hline
Linear & holds & holds\\
\hline
1-relator & holds & holds \\
\hline 
Hyperbolic & holds & holds \\
\hline
Homeo$_+(I)$ & fails (Thompson's group F) & fails (we prove in \cite{AM})\\
\hline
Diff$^{\omega}_+(I)$ & \textbf{unknown} & holds (we prove in \cite{AM}) \\
\hline
Residually finite & fails & fails \\ 
\hline 
Group of formal power series (over field $\mathbf{k}$) & fails for char$\mathbf{k}>0$ \& \textbf{unknown} for char$\mathbf{k}=0$ & \textbf{fails} for char$\mathbf{k}>0$ \& holds for char$\mathbf{k}$=0 \\
\hline
HNN Extensions & holds for proper extensions & holds for proper extensions (we prove in this paper)\\
\hline

\end{tabularx}
\medskip

 In \cite{CM}, the authors construct an example of a finitely generated residually $p$-group which is not virtually solvable, but satisfies a law. This example shows that Tits Alternative and Girth Alternative fail in the class of residually finite groups. Moreover, since every countably based pro-$p$ group embeds  into the group of formal analytic power series over $\mathbb{F}_p$ \cite{C}, both alternatives fail in the latter group as well. 
 
 \medskip 
 
In this paper, our main result is the Girth Alternative for a sub-class of HNN extensions, showing for these sub-classes, again the property of having infinite girth coincide with the property of containing a non-abelian free subgroup.
\subsection{Conventions.}
 For a given group $G$, we say the HNN extension $(G,A,B,t)$ is \begin{enumerate}
     \item {\em Proper}, when both the underlying subgroups $A$ and $B$ are proper in $G$.
     \item {\em Semi-proper}, when one of the subgroups is proper and the other is the full group $G$.
     \item {\em Full}, when both $A$ and $B$ are full group $G$. \footnote{our terminology may differ from the terminology of many other authors; for example, a semi-proper HNN extension is often called (e.g. in \cite{Bu1}, \cite{Bu2}) an ascending (or strictly ascending) HNN extension in many sources; in \cite{Bu2}, ``proper'' is used for HNN extensions which are either proper or semiproper in our terminology.} 
\end{enumerate}

\begin{thm} \label{thm:proper}
For $G$ be a finitely generated group with $A$ and $B$ two proper subgroups then $girth(\Gamma)=\infty$, where $\Gamma=(G,A,B,t)$ is a proper HNN extension of $G$ relative to $A, B$ and $\phi$.
\end{thm}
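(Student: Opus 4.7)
The strategy is to show $\mathrm{girth}(\Gamma)=\infty$ by exhibiting, for every integer $N\ge 1$, a finite generating set $S_N$ of $\Gamma$ of bounded cardinality under which no nontrivial relation of length $\le N$ holds. Britton's Lemma will be the fundamental combinatorial tool: an HNN word $g_0 t^{\epsilon_1}g_1\cdots t^{\epsilon_n}g_n$ can be shortened only via a subword $t^{-1}\alpha t$ with $\alpha\in A$ or $t\beta t^{-1}$ with $\beta\in B$. Since $A\subsetneq G$ and $B\subsetneq G$ are \emph{both} proper, we may pick and fix, once and for all, witnesses $a\in G\setminus A$ and $b\in G\setminus B$; any word in which consecutive $t^{\pm 1}$-syllables are separated in a suitable alternating pattern by $a$'s and $b$'s is then automatically Britton-reduced and hence represents a nontrivial element of $\Gamma$.

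\emph{Construction of $S_N$.} Fix a finite generating set $\{s_1,\dots,s_k\}$ of $G$, so that $\{s_1,\dots,s_k,t\}$ generates $\Gamma$. For each $N$ I will design auxiliary ``shape words'' $w^{(N)}_1,\dots,w^{(N)}_{k+1}\in\Gamma$ — each built from the letters $t^{\pm 1}$, $a$, $b$ with $t$-length growing polynomially in $N$ — whose Britton normal forms have pairwise incompatible syllable patterns (for example, $w^{(N)}_i = (at^{Ni}\,bt^{-Ni})^N$ or a similar asymmetric expression). Put
\[
S_N \;=\; \bigl\{\,s_1\,w^{(N)}_1,\;s_2\,w^{(N)}_2,\;\dots,\;s_k\,w^{(N)}_k,\;w^{(N)}_{k+1}\,\bigr\}.
\]
Because $a,b\in\langle s_1,\dots,s_k\rangle$ and $w^{(N)}_{k+1}$ visibly involves the stable letter $t$, one can recover $t$ from $w^{(N)}_{k+1}$ (modulo a word in $\{s_i\}$), and then each $s_i$ from $s_iw^{(N)}_i$. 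Hence $S_N$ generates $\Gamma$ for every $N$.

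\emph{Girth estimate.} Suppose, for contradiction, that some nontrivial word $R$ of length $\le N$ over $S_N^{\pm}$ represents $1\in\Gamma$. Expanding each generator in the alphabet $\{s_1,\dots,s_k,t\}^{\pm}$ yields a word $\widetilde R$ whose $t$-length, before reduction, is of order $N\cdot(\text{per-letter $t$-length})$. At each of the at most $N$ ``seams'' between adjacent expanded generators the incoming and outgoing fragments are of the form $w^{(N)}_i$ or its inverse, whose extremal syllables are precisely arranged so that any prospective Britton reduction across the seam would require $a\in A$ or $b\in B$, contrary to our choice. Thus at most a bounded amount of cancellation can occur at each seam; the resulting reduced $\widetilde R$ is Britton-reduced of $t$-length $\Omega(N)>0$, contradicting $\widetilde R=1$. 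This forces $\mathrm{girth}(\Gamma,S_N)>N$, and letting $N\to\infty$ gives $\mathrm{girth}(\Gamma)=\infty$.

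\emph{Main obstacle.} The delicate point is the seam analysis in the girth estimate: one must verify that after up to $N$ concatenations of generators $s_i w^{(N)}_i$ the $w^{(N)}_i$-blocks do not telescope through a cascade of Britton reductions. Designing the $w^{(N)}_i$ so that they are Britton-reduced, so that each $w^{(N)}_i (w^{(N)}_j)^{\pm 1}$ remains reduced up to a controlled constant overlap (e.g.\ by using pairwise distinct exponent patterns and alternating $a,b$ in suitable positions), and so that $S_N$ still generates $\Gamma$, must all be balanced simultaneously; this trade-off between enough structure to generate and enough ``freeness'' to avoid short relations is where the real work lies.
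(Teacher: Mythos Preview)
Your plan has a genuine gap in the seam analysis, and it is exactly the obstacle the paper's proof is designed to overcome. You choose witnesses $a\notin A$ and $b\notin B$ and build shape words $w^{(N)}_i$ from $t,a,b$; you then claim that at a seam between two expanded generators ``any prospective Britton reduction \dots\ would require $a\in A$ or $b\in B$''. But the generators of $S_N$ are $s_iw^{(N)}_i$, so the $G$-syllable sitting at a seam is not $a$ or $b$ alone---it is polluted by the $s_j$'s. For instance, with $w^{(N)}_i=(at^{Ni}bt^{-Ni})^N$, the seam in $(s_iw^{(N)}_i)^{-1}(s_jw^{(N)}_j)$ contains the subword $t^{-Ni}\,a^{-1}s_i^{-1}s_ja\,t^{Nj}$, and a Britton pinch occurs as soon as $a^{-1}s_i^{-1}s_ja\in A$, i.e.\ $s_i^{-1}s_j\in aAa^{-1}$; nothing in your setup rules this out. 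Similar uncontrolled products $s_ja$, $bs_i^{-1}$, etc.\ appear at the other seam types. One pinch can expose a new $G$-syllable of the same kind, so a cascade of reductions is not excluded by your argument. (There is also a generation issue: with the sample $w_i^{(N)}$ you wrote, every element of $S_N$ has $t$-exponent sum $0$, so $\langle S_N\rangle$ lies in the kernel of $\Gamma\to\mathbb Z$; this is fixable, but ``one can recover $t$ from $w^{(N)}_{k+1}$ modulo a word in $\{s_i\}$'' is circular as stated.)

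The paper's proof confronts precisely this difficulty. Its main work is a combinatorial lemma about the base group: for any proper subgroups $A,B\leq G$ one can choose a finite generating set $S=\{s_1,\dots,s_n\}$ of $G$ with $S\cap(A\cup B)=\emptyset$ (Proposition~2.1), with a controlled exception when $G$ surjects onto Klein's Vierergruppe, handled separately (Proposition~2.2 and Proposition~3.2). Once $s_i\notin A\cup B$ for all $i$, the $s_i$'s \emph{themselves} block Britton reductions, and the generating set $S^{(r)}=\{t,\,t^{r}s_1t^{-2r},\dots,t^{(2n-1)r}s_nt^{-2nr}\}$ has no relation of length $<r$ by a one-line application of Britton's Lemma. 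In other words, the ``real work'' you anticipated is not a matter of choosing clever shape words $w^{(N)}_i$; it is the group-theoretic task of replacing the arbitrary generating set $\{s_1,\dots,s_k\}$ by one whose elements already lie outside $A\cup B$.
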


Since proper HNN extensions are never solvable (they contain a subgroup isomorphic to $\mathbb{F}_2$), Theorem \ref{thm:proper} implies Girth Alternative for proper HNN extensions.  However, the following result provides a class of counterexamples to show that beyond our sub-class as in Theorem \ref{thm:proper}, the alternative fails in general.

 \begin{prop} \label{thm:semiproper}
For $G$ a finitely generated group satisfying a law with $A=G$ and $B$ a proper subgroup of $G$, then $girth(\Gamma)<\infty$, where $\Gamma=(G,A,B,t)$ is a semi proper HNN extension relative to $G, B$ and $\phi$.
\end{prop}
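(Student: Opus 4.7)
The plan is to show that $\Gamma$ itself satisfies a nontrivial law, after which the Schleimer--Akhmedov result cited in the introduction---a finitely generated law-satisfying group not isomorphic to $\mathbb{Z}$ has finite girth---will immediately yield $\mathrm{girth}(\Gamma)<\infty$.

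The first step is to recognize the familiar semidirect-product structure of a strictly ascending HNN extension. With $A=G$ and $B=\phi(G)\subsetneq G$, the defining relation gives $t^{-1}Gt=B\subset G$, equivalently $G\subset tGt^{-1}$, so iterating produces an ascending chain $G\subset tGt^{-1}\subset t^2Gt^{-2}\subset\cdots$. Set $\hat{G}=\bigcup_{n\ge 0}t^nGt^{-n}$. One checks directly that $t\hat{G}t^{-1}=\hat{G}$, so $\hat{G}\trianglelefteq\Gamma$. The standard homomorphism $\Gamma\twoheadrightarrow\mathbb{Z}$ sending $G\mapsto 0$ and $t\mapsto 1$ has $\hat{G}$ in its kernel; combined with the fact that $\hat{G}$ and $t$ generate $\Gamma$, this identifies $\Gamma/\hat{G}\cong\mathbb{Z}$, so $\Gamma\cong\hat{G}\rtimes\mathbb{Z}$.

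The second step is to push the law from $G$ up to $\Gamma$ by way of commutators. Each conjugate $t^nGt^{-n}$ is isomorphic to $G$, so if $G$ satisfies the law $w(x_1,\ldots,x_k)=1$, then so does each $t^nGt^{-n}$, and hence so does their directed union $\hat{G}$ (every finite tuple lies in a single stage of the chain). Since $\Gamma/\hat{G}\cong\mathbb{Z}$ is abelian, $[\Gamma,\Gamma]\subseteq\hat{G}$, so for any $x_1,y_1,\ldots,x_k,y_k\in\Gamma$ the commutators $[x_i,y_i]$ lie in $\hat{G}$, and therefore
\[
w\bigl([x_1,y_1],[x_2,y_2],\ldots,[x_k,y_k]\bigr)=1
\]
is a nontrivial law on $\Gamma$.

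To finish, I would rule out $\Gamma\cong\mathbb{Z}$. Since any injective endomorphism of a finite group is surjective, $B\subsetneq G$ forces $G$ to be infinite; therefore $\hat{G}$ is an infinite normal subgroup of $\Gamma$ with infinite cyclic quotient, which is incompatible with $\Gamma\cong\mathbb{Z}$. Invoking the Schleimer--Akhmedov theorem then gives $\mathrm{girth}(\Gamma)<\infty$. The main obstacle, such as it is, lies in the semidirect-product decomposition of $\Gamma$ together with the observation that a law on any subgroup containing $[\Gamma,\Gamma]$ pulls back via commutator substitution to a law on $\Gamma$; both steps are classical, so once assembled correctly the argument is quite short.
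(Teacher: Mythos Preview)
Your proof is correct and follows essentially the same approach as the paper: both form the directed union $\hat{G}=\bigcup_n t^nGt^{-n}$, observe it is normal with $\Gamma/\hat{G}\cong\mathbb{Z}$, note that $\hat{G}$ inherits the law from $G$, and then deduce that $\Gamma$ itself satisfies a law (hence has finite girth, being non-cyclic). The only cosmetic difference is that the paper invokes the general fact that an extension of two law-satisfying groups satisfies a law, whereas you make this explicit via the commutator substitution $w([x_1,y_1],\ldots,[x_k,y_k])$; this is precisely the standard proof of that fact in the case of abelian quotient.
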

 
The proof of this proposition is easier; we simply observe that  under the given hypothesis, $\Gamma=(G,A,B,t)$ will satisfy a law and will not be cyclic. In Section 4, we discuss a particulaly interesting case when $G$ is a nilpotent group. 

\medskip

Treating the case of amalgamated free product of groups uses significantly different ideas; we provide a separate proposition devoted to this case.  

\begin{prop} \label{thm:amalgamation} Any proper amalgamated free product $A*_CB$, where $A, B$ are finitely generated groups and  $\max \{A:C, B:C\}\geq 3$ , has infinite girth.   
\end{prop}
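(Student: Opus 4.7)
My plan is to apply the standard generator-twisting technique used in earlier girth alternatives. Fix any integer $n \geq 1$; I shall exhibit a finite generating set $S_n$ of $\Gamma := A *_C B$ such that no non-trivial word of length at most $n$ over $S_n$ represents the identity in $\Gamma$. This will suffice to prove $girth(\Gamma) = \infty$.

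The first step is to extract a non-abelian free subgroup $F \leq \Gamma$ with explicit generators. Because the amalgamation is proper, $[A:C] \geq 2$ and $[B:C] \geq 2$, and we may assume $[B:C] \geq 3$. Picking $a \in A \setminus C$ and $b_1, b_2 \in B$ in distinct non-trivial cosets of $C$, the elements $x := a b_1$ and $y := a b_2$ act loxodromically on the Bass--Serre tree $T_\Gamma$ of $\Gamma$ with disjoint axes; a ping-pong argument on appropriate half-trees yields $\langle x, y \rangle \cong F_2$.

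Next, start with any finite symmetric generating set $\{t_1, \ldots, t_k\}$ of $\Gamma$. For integers $N_1 \ll N_2 \ll \cdots \ll N_k$ to be chosen below, put $w_i := (xy)^{N_i} x \in F$, $s_i := t_i w_i$, and
\[ S_n := \{s_1, \ldots, s_k, x, y\}. \]
Since $t_i = s_i w_i^{-1}$, the set $S_n$ generates $\Gamma$. The crucial features are that each $w_i$ lies in $F$, has translation length on $T_\Gamma$ proportional to $N_i$, and the gaps $N_{i+1} - N_i$ can be made larger than any quantity depending only on $n$ and on the displacements of the fixed generators $t_j$ on $T_\Gamma$.

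Finally I must show that any word $W = s_{i_1}^{\epsilon_1} \cdots s_{i_\ell}^{\epsilon_\ell}$ with $\ell \leq n$ that admits no trivial cancellation (and that does not lie entirely in $\langle x, y \rangle$, a case handled directly by freeness) is non-trivial in $\Gamma$. Expanding each $s_{i_j}^{\pm 1}$, the word $W$ becomes an alternating product of short blocks $t_{i_j}^{\pm 1}$ and long blocks $w_{i_j}^{\pm 1} \in F$. Each $w_{i_j}^{\pm 1}$ translates a fixed basepoint of $T_\Gamma$ by roughly $N_{i_j}$ along its axis, while the collective $t_j$-contribution is uniformly bounded by $n \cdot \max_j \tau_{T_\Gamma}(t_j)$. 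Choosing the gaps $N_{i+1} - N_i$ large enough that the axes of distinct $w_i$'s are well-separated and cannot be matched up by the bounded $t_j$-displacements, a tree-translation estimate forces $W$ to have positive stable translation length on $T_\Gamma$; in particular $W \neq 1$. The main obstacle is precisely this last step: cancellation in an amalgam is subtle because of $C$-conjugation between adjacent syllables, and the generators $t_j$ may themselves act loxodromically on $T_\Gamma$ with non-trivial translation length. I expect to handle this via a quantitative ping-pong / axis-separation estimate on $T_\Gamma$, isolating the large $F$-content coming from the $w_{i_j}$ blocks from the bounded $\langle t_j \rangle$-content.
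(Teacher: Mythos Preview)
Your outline has two genuine gaps, one concrete and one structural.

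\medskip

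\textbf{The free subgroup does not exist as claimed.} The assertion that $x=ab_1$ and $y=ab_2$ have disjoint axes on the Bass--Serre tree is false: both $x$ and $y$ send the vertex $B$ to the vertex $aB$, so both axes contain the length-$2$ segment $B\,{-}\,A\,{-}\,aB$. Worse, $\langle x,y\rangle$ need not be free at all. Take $A=\langle a\mid a^2\rangle$, $B=\langle b\mid b^3\rangle$, $C=1$ (so $[A:C]=2$, $[B:C]=3$), $b_1=b$, $b_2=b^2$. Then $x^{-1}y=b_1^{-1}b_2=b$, hence $\langle x,y\rangle=\langle ab,b\rangle=\langle a,b\rangle=A*B\cong PSL_2(\mathbb{Z})$, which is not free. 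Your ``case handled directly by freeness'' therefore collapses, and so does the ping-pong you invoke. This is repairable (one can pass to suitable powers, or choose the pair more carefully), but it is not what you wrote.

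\medskip

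\textbf{The final step is the whole proof and you have not supplied it.} Even granting a rank-$2$ free subgroup, the sentence ``a tree-translation estimate forces $W$ to have positive stable translation length'' is the entire content of the proposition, and you only say you \emph{expect} to handle it. With a one-sided twist $s_i=t_iw_i$ the expansion of a reduced word in $S_n$ is not a clean alternation of short and long blocks (e.g.\ $s_i^{-1}s_j=w_i^{-1}t_i^{-1}t_jw_j$ places two $t$-letters together), and more seriously the $t_j$'s are arbitrary elements of $\Gamma$ whose normal forms may begin or end with long $A$- or $B$-syllables that interact with the fixed alternating pattern of the $w_i$'s through $C$-conjugation. Nothing you have written prevents the first syllable of $t_{i_j}^{\pm1}$ from absorbing or conjugating away a large initial segment of the adjacent $w$-block; a bound of the form ``$n\cdot\max_j\tau_{T_\Gamma}(t_j)$'' controls displacement of a basepoint but not cancellation in the normal form, and an element can have small (even zero) translation length while being highly non-trivial.

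\medskip

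For comparison, the paper does not work on the tree at all. It first isolates, via an elementary lemma, elements $a_1,a_2\in A$ with $a_1,a_2,a_1^{-1}a_2,a_2a_1^{-1}$ and all four conjugates $a_i^{\pm1}a_ja_i^{\mp1}$ lying outside $C$ (this is where the index-$3$ hypothesis is used). It then builds three specific length-$3$ alternating words $u,v,w$ out of $a_1,a_2,b_1$, sandwiches each generator of $A$ and of $B$ on \emph{both} sides by long words in $u,v,w$ with controlled prefix/suffix structure, and verifies \emph{directly from the normal form theorem} that no short relation can occur --- precisely because the list of eight elements above avoids $C$. The point is that the ``short blocks'' are single letters of $A$ or $B$, not arbitrary elements of $\Gamma$, so the interaction with the long blocks is completely explicit.
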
 

We call an amalgamated free product proper if $C$ is a proper subgroup in both $A$ and $B$. Notice that without the condition $\max \{A:C, B:C\}\geq 3$ the claim does not hold since $girth(D_{\infty }) = 2 < \infty $. \footnote{We use the notation $D_{\infty }$ for the infinite dihedral group $\langle a, b \ | \ a^2 = b^2 = 1\rangle $. It is an infinite virtually cyclic group. Recall that a finite dihedral group $D_n, n\geq 2$ is given by the presentation $\langle a, b \ | \ a^2 = b^2 = (ab)^n = 1\rangle $. The term {\em dihedral group} will refer to $D_q$ where $q\in (\mathbb{N}\backslash \{0,1\})\cup \ \{\infty ]$.}  

\medskip 

Interestingly, we also obtain the following result which shows that the Girth Alternative holds in general for the class of any HNN extension (proper, semi proper or full) of the non-abelian free group $F_n$ for $n\geq 2$.

\begin{prop} \label{thm:free}
Any HNN extension of a non-elementary word hyperbolic group has infinite girth.   
\end{prop}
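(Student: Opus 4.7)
The plan is to split on the propriety of the HNN extension and use the hyperbolic structure of $H$ for the remaining cases. If both $A$ and $B$ are proper subgroups of $H$, Theorem \ref{thm:proper} immediately gives $girth(\Gamma) = \infty$. Otherwise, after possibly replacing $t$ by $t^{-1}$, I may assume $A = H$, so that $t^{-1}Ht = B \subseteq H$; this covers both the semi-proper case ($B \subsetneq H$) and the full case ($B = H$, $\Gamma \cong H \rtimes_\phi \mathbb{Z}$). Note that non-elementary word hyperbolic groups contain non-abelian free subgroups, so Proposition \ref{thm:semiproper} does not obstruct: they satisfy no law.

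For the non-proper case, the starting point is the Girth Alternative for non-elementary word hyperbolic groups \cite{azer2}: for every $n \geq 1$ there is a finite generating set $T_n$ of $H$ with $girth(H, T_n) > n$. I would extend $T_n$ to a generating set of $\Gamma$ by adjoining a single element in the $t$-coset: set $S_n = T_n \cup \{t h_n\}$, where $h_n \in H$ is a loxodromic element with $T_n$-length exceeding $n$, chosen so that its attracting and repelling fixed points on $\partial H$ are in general position relative to those of the elements of $T_n$. Such an $h_n$ exists because a non-elementary hyperbolic group contains infinitely many loxodromic elements with pairwise disjoint limit sets on $\partial H$. To verify $girth(\Gamma, S_n) > n$, I would analyze an $S_n$-word $w$ of length at most $n$ representing the identity: if $w$ uses only $T_n$, the choice of $T_n$ forces a contradiction; otherwise, substituting $t = (t h_n) h_n^{-1}$ rewrites $w$ as a word in the letters $H \cup \{t^{\pm 1}\}$ that still equals the identity in $\Gamma$, and Britton's Lemma forces $w$ to contain a pinch, which should be obstructed by the ping-pong dynamics of $h_n$ versus $T_n$ on $\partial H$.

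The main obstacle is the pinch analysis in the ascending case. Since $A = H$, any subword of the form $t^{-1} u t$ with $u \in H$ is a legitimate pinch that rewrites to $\phi(u) \in H$, so naive Britton-style length estimates do not suffice; one must arrange the dynamical independence of $h_n$ from $T_n$ carefully enough that no cascade of pinches can collapse $w$ to the identity within length $n$. I expect the cleanest route is a joint ping-pong argument combining the Bass-Serre tree action of $\Gamma$ (in which $t h_n$ acts loxodromically with large translation length, tunable by the choice of $h_n$) with the boundary action of $H$ on $\partial H$, which together rule out short $S_n$-relations and yield $girth(\Gamma) = \infty$.
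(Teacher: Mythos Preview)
Your case split and the proper case are fine, and you correctly identify the real difficulty: in the ascending situation $A=H$ every subword $t^{-1}ut$ with $u\in H$ is a pinch, so Britton's Lemma gives no length control by itself. However, the fix you propose does not work. On the Bass--Serre tree of an ascending HNN extension, every $h\in H$ fixes the entire ray $\{t^nv_0:n\geq 0\}$ (since $t^{-1}Ht\subseteq H$), so $th_n$ sends $t^kv_0$ to $t^{k+1}v_0$ for all $k\geq 0$ and therefore has translation length exactly $1$, independently of $h_n$. There is nothing ``tunable'' here, and the tree action cannot separate $th_n$ from $t$ in the way you need.

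The boundary ping-pong idea is closer to what actually works, but your setup is not sharp enough. Once you start reducing a short $S_n$-word via pinches, the $H$-syllables that appear are not the original $T_n$-letters but their $t$-conjugates $t^{-j}g t^{j}\in H$ for various $j\geq 0$; having $h_n$ in general position with respect to $T_n$ says nothing about its position relative to these conjugates. The paper's argument confronts this head-on: for a given target girth $r$ it chooses hyperbolic elements $\beta,\gamma\in H$ so that the fixed-point sets of \emph{all} conjugates $t^{-j}\beta t^{j}$ and $t^{-j}\gamma t^{j}$ for $0\leq j\leq 2r$, together with their translates by $t^{-j}g_i^{\pm 1}t^{j}$, are arranged into two disjoint ping-pong families on $\partial H$. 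The generating set then has $t$ itself as a letter and the remaining letters are of the form $\beta^{kNr}g_i\gamma^{kNr}$; the key structural fact used is that in a semi-proper or full extension every element of $\Gamma$ can be written as $t^{p}gt^{-q}$ with $g\in H$ and $p,q\geq 0$, so a short relation (after a cyclic shift) becomes a product of factors $t^{-j}(\cdot)t^{j}$ with bounded $j$, exactly the conjugates accounted for in advance. Your proposal lacks this pre-arrangement over the relevant range of $t$-powers, and without it the pinch cascade cannot be controlled.
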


As a special case we obtain the following 
\begin{cor} \label{thm:free1}
Any HNN extension of the non-abelian free group $\mathbb{F}_n$ for any $n\geq 2$ has infinite girth.   
\end{cor}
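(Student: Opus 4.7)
The plan is to apply Proposition \ref{thm:free} directly: since that proposition already covers HNN extensions of arbitrary non-elementary word hyperbolic groups, the corollary reduces to verifying that $\mathbb{F}_n$ for $n \geq 2$ lies in that class.

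For this verification, $\mathbb{F}_n$ acts freely and cocompactly by isometries on its $2n$-regular Cayley tree with respect to the standard free basis, so it is word hyperbolic (the tree being $0$-hyperbolic, equivalently, $\mathbb{F}_n$ is the fundamental group of a finite graph). It is non-elementary, i.e.\ not virtually cyclic, because for $n \geq 2$ it contains a non-abelian free subgroup of rank two (namely itself when $n = 2$, or the subgroup generated by any two free generators when $n \geq 3$). Hence every HNN extension $(\mathbb{F}_n, A, B, t)$, whether proper, semi-proper, or full, has infinite girth by Proposition \ref{thm:free}.

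Since the corollary is a one-line consequence of the proposition, there is no real obstacle at this step; the interesting content is entirely in Proposition \ref{thm:free}. Were one instead to attempt a self-contained proof, the natural route would be to use Bass--Serre theory to let $\Gamma = (\mathbb{F}_n, A, B, t)$ act on its Bass--Serre tree, exploit the non-elementarity of $\mathbb{F}_n$ to extract elements of $\Gamma$ acting hyperbolically on the tree with sufficiently independent axes, and then run a ping-pong argument to build, for each $k \in \mathbb{N}$, a finite generating set of $\Gamma$ whose shortest non-trivial relation has length at least $k$ --- which is precisely the pattern underlying the girth constructions in the earlier works \cite{azer2} and \cite{azer3}.
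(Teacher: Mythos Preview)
Your proposal is correct and matches the paper's own treatment: the corollary is presented there simply as a special case of Proposition~\ref{thm:free}, with no separate argument given. Your added verification that $\mathbb{F}_n$ for $n\geq 2$ is non-elementary word hyperbolic is routine and fills in exactly the detail the paper leaves implicit.
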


 Interestingly, the claim of this corollary does not seem to lend itself to more elementary methods, or to known results in the literature about HNN extension of free groups. We discuss and emphasize some relevant problematic  issues in Section 6. Notice that in the case of $n=1$, Corollary  \ref{thm:free1} easily fails even for semi-proper HNN extensions since the Baumslag-Solitar group $BS(1,m) = \langle a, b \ | aba^{-1} = b^m\rangle $ is solvable hence has finite girth. On the other hand, proper HNN extensions of $\mathbb{Z}$ are all non-solvable one-relator groups, hence, by Theorem 3.1 in \cite{azer2}, have infinite girth. Let us also point out that HNN extensions of a free group $\mathbb{F}_k$ are not necessarily linear; for $k=1$, recall that the groups $BS(n,m) = \langle a, b \ | ab^na^{-1} = b^m\rangle $ are non-Hopfian hence not linear for all $n, m > 1, (m, n) = 1$, and for $k\geq 2$, examples are provided in \cite{DS}

\section{Preliminary results}
First, we prove the following proposition which seems interesting to us also from a purely combinatorial point of view. 

\begin{prop} \label{thm:AB}
Let $G$ be a finitely generated group such that no quotient of $G$ is isomorphic to a dihedral group $D_{n}, n\in (\mathbb{N}\backslash \{0,1\})\cup \{\infty\}$, and let $A, B$ be proper isomorphic subgroups of $G$. Then $G$ admits a finite generating set $S$ such that $S\cap(A\cup B)=\emptyset$.
\end{prop}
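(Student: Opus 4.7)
The plan is to prove the proposition by contradiction, using the classical theorem of Scorza on coverings of a group by three proper subgroups. First I will note that $A\cup B\neq G$, since a group is never the union of two proper subgroups (if $a\in A\setminus B$ and $b\in B\setminus A$, then $ab$ cannot lie in either $A$ or $B$ without forcing $b\in A$ or $a\in B$). Hence $T:=G\setminus(A\cup B)$ is nonempty.

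The heart of the argument will be the claim that $\langle T\rangle=G$. Once this is established, the required finite generating set is extracted easily: since $G$ is finitely generated, pick any finite generating set $\{g_1,\dots,g_n\}$, express each $g_i$ as a word in elements of $T$, and let $S$ be the (finite) set of letters that actually appear in these words. Then $S\subseteq G\setminus(A\cup B)$ and $\langle S\rangle\supseteq\{g_1,\dots,g_n\}$, so $S$ is the desired generating set.

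To prove the claim, suppose for contradiction that $H:=\langle T\rangle$ is a proper subgroup of $G$. Since $T\subseteq H$, we get $G\setminus H\subseteq A\cup B$ and hence $G=H\cup A\cup B$. In the subcase $A=B$, this collapses directly: for any $g\in G\setminus A$ and any $a\in A$ the product $ag$ is not in $A$, so $ag\in T\subseteq H$, whence $a=(ag)g^{-1}\in H$. Combined with $T\subseteq H$ this gives $H=G$, a contradiction. In the subcase $A\neq B$ I will verify that $H,A,B$ are three pairwise distinct proper subgroups of $G$: for instance, $H=A$ would force $G=A\cup B$, already ruled out in the first paragraph, and similarly for $H=B$. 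Thus $G$ is expressible as a union of three proper subgroups, and Scorza's theorem then produces a quotient of $G$ isomorphic to the Klein four-group $V_4=\mathbb{Z}/2\oplus\mathbb{Z}/2=D_2$, contradicting the dihedral hypothesis.

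The only real obstacle is the clean invocation of Scorza's theorem (a group that is the set-theoretic union of three proper subgroups admits a surjection onto $V_4$); once this is cited, the rest is elementary. Note that the argument uses only the $D_2$ case of the hypothesis, while the exclusion of $D_n$ for $n\geq 3$ and for $n=\infty$ is presumably required in the later application to Theorem~\ref{thm:proper}.
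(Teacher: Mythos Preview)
Your proof is correct, and in fact the paper itself acknowledges this Scorza-based route in a remark following its Proposition~\ref{thm:ABnew}: the authors note that Scorza's theorem yields Proposition~\ref{thm:AB} (and the sharper Proposition~\ref{thm:ABnew}) directly, via essentially the argument you give. Your observation that only the $D_2$ case of the hypothesis is invoked is also on the mark and is exactly the content of Proposition~\ref{thm:ABnew}.

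The paper's own proof of Proposition~\ref{thm:AB}, however, is deliberately different: it is a hands-on combinatorial argument working with generating sets of minimal cardinality $d=d(G)$, maximizing the quantity $\delta(S)=|S\setminus(A\cup B)|$ over such sets, and then showing through a sequence of claims that $\delta(S)<d$ forces a dihedral quotient. The payoff of this more laborious route is that \emph{from the proof} one extracts Proposition~\ref{thm:atmostone} (there is always a minimal generating set $S$ with $|S\cap(A\cup B)|\leq 1$ and $\min\{|S\cap(A\setminus B)|,|S\cap(B\setminus A)|\}=0$), which is needed alongside Proposition~\ref{thm:AB} to handle the residual case in the proof of Theorem~\ref{thm:proper} via Proposition~\ref{thm:nongeneric}. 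The paper explicitly flags that the Scorza shortcut does \emph{not} yield Proposition~\ref{thm:atmostone}. So your argument is cleaner and more conceptual for the proposition as stated, while the paper's longer argument is doing double duty.
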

\begin{proof}
Let $d=d(G)$ be the minimal cardinality of a generating set of $G$ and $$\mathcal{S}=\{S\subset G| |S|=d,\langle S\rangle =G\}.$$ For cyclic groups the claim is obvious (recalling that a subgroup of a cyclic group is cyclic so we will assume that $G$ is non-cyclic. Then we have $d\geq 2$. We introduce the following quantities: 

$$\alpha(S)=|S\cap (A\backslash B)|, \beta(S)=|S\cap (B\backslash A)|$$ $$\gamma(S)=|S\cap (A\cap B)|, \delta(S)=|S\backslash(A\cup B)|$$

We now claim that there exists a finite generating set $S\in \mathcal{S}$ such that $\delta(S)= d$ (i.e. $\alpha(S)=0, \beta(S)=0,\gamma(S)=0$). Indeed, let $S\in \mathcal{S}$ such that $\delta(S)$ is maximal. Assume $\delta(S)<d$.
\medskip

\textbf{Claim 1:} $\alpha(S)=0$ or $\beta(S)=0.$

Proof: Indeed, assume that $\alpha(S)\geq 1$ and $\beta(S)\geq 1,$ with $s_1\in S\cap(A\backslash B)$ and $s_2\in S\cap (B\backslash A)$. Then replace $S$ with $S'=(S\backslash \{s_1\})\cup \{s_1s_2\}$. Since $s_1s_2\not \in A\cup B$, we obtain that $\delta(S')=\delta(S)+1$, contradicting maximality of $\delta(S)$.

Thus, without loss of generality, we may and will assume that $\beta(S)=0$. Notice that $\delta(S)>0$ because $A$ is a proper subgroup of $G$.
\medskip

\textbf{Claim 2:} $\alpha(S)+\gamma(S)\leq 1$

Proof. For $S$ with $\beta (S) = 0$, suppose $\alpha(S)+\gamma(S)>1$, and let $s_1,s_2\in S\cap A, s_1\neq s_2$. Let also $s_3\in S\backslash (A\cup B)$, then $s_1s_3\not \in A$ but by maximality of $\delta (S)$ \ $$s_1s_3\not \in A\implies s_1s_3\in B.$$ Replace $S$ by $S'=\{s_1s_3,s_2,s_3,\dots, s_n\}$, note that as $\beta(S')=1\neq 0$ then by Claim 1 we have $\alpha(S')=0$, which forces $s_2\in S\cap (A\cap B)$. Similarly, let $S''=\{s_2s_3,s_1,s_3,\dots, s_n\}$ and symmetrically, we obtain that $s_1\in S\cap (A\cap B)$. Then $s_1s_3\notin B$. Contradiction.

\medskip

\textbf{Claim 3:} $\alpha(S)+\gamma(S)=0$, unless $G$ has a quotient isomorphic to a quotient of $D_{\infty }$.

Proof: Assuming the contrary, let $\alpha(S)+\gamma(S)=1$ by Claim 2. Hence $|S\cap A|=1$, so let $S\cap A=\{a\}$ and $S\backslash A=\{s_1,\dots,s_{d-1}\}$. Notice that for all $1\leq i\leq d-1$, $$as_i\not \in A, as_i\in B$$ Choose $s_j\in S$ such that $s_j\not \in B$ (such an $s_j$ exists otherwise that would lead to $\delta(S)=0$; but specifically, we have already assumed that $\delta (S)$ is maximal, $\beta (S) = 0$ and  $\alpha(S)+\gamma(S)=0$, so $s_j\notin B$ for all $1\leq j\leq d-1$). Then, for all $1\leq i\leq d-1$, $$as_is_j\in A, as_is_j\not \in B$$ So, using maximality of $\delta(S),$ inductively on the length of $m\geq 1$ of a reduced word $w=w(s_1,s_2,\dots,s_{d-1})$ in the alphabet set $\{s_1^{\pm 1},\dots, s_{d-1}^{\pm 1}\}$, it follows that $$aw\not \in A, aw\in B \hspace{0.1cm}\text{if $m$ is odd},$$ $$aw\in A, aw\not \in B\hspace{0.1cm} \text{if $m$ is even}.$$ Now, let $\mathcal{W}$ be the set of reduced words in the alphabet $\{s_1^{\pm 1},\dots, s_{d-1}^{\pm 1}\},$ $$H=\{g\in G|g=w(s_1,\dots,s_{d-1})\hspace{0.1cm} \text{such that} \hspace{0.1cm} w\in \mathcal{W}\},$$ $$H_1=\{g\in H|g=w(s_1,\dots,s_{d-1})\hspace{0.1cm}\text{can be written as a word of even length in $H$ in} \hspace{0.1cm} s_1^{\pm 1},\dots,s_{d-1}^{\pm 1}\}$$
\medskip

Then, $[H:H_1]\leq 2$, hence $H_1\trianglelefteq H$. Also, $aH_1\subseteq A$, but note that for all $i\in \mathbb{Z}$,
\begin{equation}
    aH_1\subseteq A\implies H_1\subseteq A \implies a^iH_1a^{-i}\subseteq A
\end{equation} Then as, $H_1\trianglelefteq H$, we also have for all $1\leq i \leq d-1$,
\begin{equation}
    s_iH_1s_i^{-1}\subseteq H_1\subseteq A 
\end{equation} 
Any word in $H_1$ can be written as product of odd words $u^{-1},v \in H$ and we know that $au,av\in B$ so we have, 
\begin{equation}
    u^{-1}v=u^{-1}a^{-1}av=(au)^{-1}av \in B\implies H_1\subseteq B
\end{equation}
Similarly, $aH_1a^{-1}\subseteq B$ and for any $u$ odd word in $H$\begin{equation}
    a^2=auu^{-1}a=au(a^{-1}u)^{-1}\in B
\end{equation}

 Finally, notice that \begin{equation} s_ias_i^{-1}\in A \ \mathrm{for \ all} \  1\leq i\leq d-1 \end{equation} because otherwise we can replace $S$ with $S_1 = (S\backslash \{a\})\cup \{s_ias_i^{-1}\}$ and since $s_ias_i^{-1}\notin B$ (because $as_i^{-1}\in B$)  we would have $S_1\cap (A\cup B) = \emptyset $.
From $(1),(2),(3), (4)$ and $(5)$ we get $$N_G(H_1,a^2)\subseteq A\cap B \footnote{Here, $N_G(H_1,a^2)$ denotes the normal closure of the subset $H_1\cup \{a^2\}$}$$ 

\medskip 

Also, in the quotient $G/N_G(H_1,a^2)$ we have $\overline{s_i} = \overline{s_j}, 1\leq i, j\leq d-1$ (i.e. the images of $s_i$ and $s_j$ are equal) thus by taking $b = s_1$ we obtain that this quotient is isomorphic to a quotient of infinite dihedral group,  $$D_{\infty} =\langle \bar{a},\bar{b}|\bar{a}^2=\bar{b}^2=e\rangle $$ where $N_G(H_1,a^2)$ is the normal closure of $\{H_1,a^2\}$ in $G$. So, we get a  quotient of $G$ which is isomorphic to a quotient of $D_{\infty}$ but any such quotient is isomorphic to $D_n, n\in (\mathbb{N}\backslash \{0, 1\})\cup \{\infty \}$ which contradicts our assumption.
\end{proof}

{\em From the proof},  we immediately obtain the following proposition which will be of important use in the next section.

\begin{prop} \label{thm:atmostone} Let $G$ be a finitely generated group, $A, B$ be proper subgroups of $G$. Then there exists a finite generating set $S$ of $G$ with $|S\cap (A\cup B)| \leq 1 $ and $\min \{|S\cap (A\backslash B)|, |S\cap (B\backslash A)|\} = 0$.
\end{prop}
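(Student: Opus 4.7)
The plan is to read off the conclusion directly from Claims 1 and 2 inside the proof of Proposition~\ref{thm:AB}, observing that neither of those two claims uses the no-dihedral-quotient hypothesis (it enters only in Claim 3, which sharpens $\alpha(S)+\gamma(S)\leq 1$ down to $=0$). Since the present proposition allows $|S\cap(A\cup B)|\leq 1$, stopping after Claim 2 is exactly what we need.

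First, handle the trivial case: if $G$ is cyclic, then any generator lies in no proper subgroup, so $\{s\}$ with $G=\langle s\rangle$ already satisfies both conclusions. Assume therefore that $G$ is non-cyclic, set $d=d(G)\geq 2$, and let $\mathcal{S}$ be the collection of generating sets of cardinality $d$. Choose $S\in\mathcal{S}$ so as to \emph{maximize} the quantity $\delta(S)=|S\setminus(A\cup B)|$, exactly as in the proof of Proposition~\ref{thm:AB}, and retain the notation $\alpha(S),\beta(S),\gamma(S)$.

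Now invoke Claim 1 verbatim: if both $\alpha(S)\geq 1$ and $\beta(S)\geq 1$, then replacing $s_1\in S\cap(A\setminus B)$ by $s_1s_2$ with $s_2\in S\cap(B\setminus A)$ yields a generating set $S'\in\mathcal{S}$ with $\delta(S')=\delta(S)+1$, contradicting maximality. Thus, after possibly swapping the roles of $A$ and $B$, we may assume $\beta(S)=0$, which already gives $\min\{|S\cap(A\setminus B)|,|S\cap(B\setminus A)|\}=0$. Next invoke Claim 2 verbatim: under $\beta(S)=0$, two distinct elements $s_1,s_2\in S\cap A$ together with some $s_3\in S\setminus(A\cup B)$ (which exists because $A$ is proper, forcing $\delta(S)>0$) can be played off against one another via the two substitutions $S'=\{s_1s_3,s_2,s_3,\dots\}$ and $S''=\{s_2s_3,s_1,s_3,\dots\}$; combining the two conclusions forces $s_1,s_2\in A\cap B$, whence $s_1s_3\notin B$, a contradiction. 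Hence $|S\cap A|=\alpha(S)+\gamma(S)\leq 1$.

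Finally, observe that $\beta(S)=0$ implies $S\cap B\subseteq S\cap A$, so $S\cap(A\cup B)=S\cap A$, giving $|S\cap(A\cup B)|\leq 1$, as required. There is no real obstacle here since every step is already present in the preceding proof; the only point to articulate cleanly is that Claim 3, which was the only place where the dihedral hypothesis of Proposition~\ref{thm:AB} intervened, is simply not invoked.
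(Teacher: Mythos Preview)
Your argument is correct and follows the same route as the paper: both extract Proposition~\ref{thm:atmostone} directly from the proof of Proposition~\ref{thm:AB}, using that Claims~1 and~2 already force $\beta(S)=0$ and $\alpha(S)+\gamma(S)\le 1$ for a $\delta$-maximizing $S$, whence $S\cap(A\cup B)=S\cap A$ has at most one element. Your write-up is in fact more explicit than the paper's one-line justification, and your observation that Claim~3 (the only place the dihedral hypothesis enters) is not needed is exactly the point.
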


 Indeed, in the proof of Claim 3, we notice that $s_i\notin B, 1\leq i\leq d-1$, because (since $as_i\in B$) otherwise $a\in B$ which contradicts our assumption there. $\square $

  \medskip 

\begin{rem} \label{thm:KL} For the group $D_{\infty } = \langle a, b \ | \ a^2 = b^2 = 1\rangle $ with subgroups $K = \langle a, bab \rangle ,  L = \langle b, aba \rangle $ there exists no finite generating set $S$ of $G$ such that $S\cap (K\cup L) = \emptyset $. Indeed, $K\cup L$ contains all words of odd length of $D_{\infty }$ hence no generating set of $D_{\infty }$ is contained in $D_{\infty }\backslash (K\cup L)$. Also, the subgroups $K$ and $L$ are both maximal subgroups of $D_{\infty }$.
\end{rem}

\medskip

 It is useful to introduce the following property of finitely generated groups: 
 
 \medskip 
 
 $(P)$ We say a finitely generated group has property $(P)$  if for all proper subgroups $A,B \leq G$, the group $G$ admits a finite generating set $S$ such that $S\subseteq G\backslash (A\cup B)$.

  \medskip 
  
  Let us note that the infinite dihedral group $D_{\infty }$ has an obvious quotient isomorphic to  Klein's Vierergruppe $\Z /2\Z \times \Z/2\Z$. On the other hand, if $G$ surjects onto  $\Z /2\Z \times \Z/2\Z$, then by taking $A, B$ as preimages of  $(1,0)$ and $(0,1)$ respectively, we see hat there is no generating set $S$ of $G$ which is in $G\backslash (A\cup B)$ since the latter set is in the preimage of $(1,1)$. Thus, combined with this observation, Proposition \ref{thm:AB} can be stated  in the following  cleaner form: 
 
 \begin{prop} \label{thm:ABnew}  A finitely generated group has property $(P)$ if and only if it does not surject onto the Klein's Vierergruppe.
 \end{prop}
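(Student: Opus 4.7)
The plan is to split into two directions. The reverse direction follows immediately from the discussion preceding the statement: if $\pi \colon G \twoheadrightarrow V$ is a surjection onto the Vierergruppe $V = \mathbb{Z}/2\mathbb{Z}\times\mathbb{Z}/2\mathbb{Z}$, setting $A = \pi^{-1}\langle (1,0)\rangle$ and $B = \pi^{-1}\langle (0,1)\rangle$ yields proper subgroups of $G$ for which $G \setminus (A\cup B) = \pi^{-1}((1,1))$ sits in a single coset of the proper subgroup $\pi^{-1}\langle (1,1)\rangle$, so no finite generating set of $G$ can avoid $A\cup B$.

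For the forward direction, fix proper subgroups $A, B \leq G$ and rerun the argument from the proof of Proposition~\ref{thm:AB} verbatim, which works for arbitrary proper $A, B$ (the isomorphism between $A$ and $B$ is never invoked in Claims 1--3). Choose $S \in \mathcal{S}$ maximizing $\delta(S)$. If $\delta(S) = d$ we are done; otherwise we land in the Claim 3 situation with $S = \{a, s_1, \dots, s_{d-1}\}$, $a \in A$, each $s_i \notin A \cup B$, and the alternating-parity identities $aw \in A$ for $w \in H_1$ and $aw \in B$ for $w \in H \setminus H_1$. Claim 3 provides a normal subgroup $N \subseteq A \cap B$ making $\bar G = G/N$ a nontrivial quotient of $D_\infty$ generated by two distinct involutions $\bar a, \bar b$.

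To upgrade $\bar G$ to a Vierergruppe quotient, enlarge $N$ by adjoining $(as_1)^2$. The relations from Claim 3 immediately give $(as_1)^2 \in A \cap B$: $as_1 \in B$ yields $(as_1)^2 \in B$, and $s_1 a s_1^{-1} \in A$ combined with $s_1^2 \in H_1 \subseteq A$ yields $(as_1)^2 = a(s_1 a s_1^{-1})s_1^2 \in A$. Let $N' = N_G(H_1, a^2, (as_1)^2)$. The quotient $\bar G' = G/N'$ is then generated by $\bar a, \bar b$ satisfying $\bar a^2 = \bar b^2 = (\bar a \bar b)^2 = 1$, so it is a quotient of $V$. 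The non-degeneracy arguments used in Claim 3 to rule out the trivial and $\mathbb{Z}/2\mathbb{Z}$ quotients still apply ($\bar a = 1$ or $\bar b = 1$ would force $s_1 \in A$, and $\bar a = \bar b$ would force $s_1 \in Aa = A$, each contradicting $s_1 \notin A$), so $\bar G' \cong V$. This produces a surjection $G \twoheadrightarrow V$, contradicting our hypothesis.

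The main obstacle is verifying that $N'$ remains inside $A \cap B$, since $A \cap B$ need not be normal in $G$. This mirrors the analogous implicit claim $N_G(H_1, a^2) \subseteq A \cap B$ underlying Claim 3, and reduces to showing that $G$-conjugates of $(as_1)^2$ stay in $A \cap B$. The alternating-parity identities from Claim 3 together with the normalizing relation $s_i a s_i^{-1} \in A$ should suffice after a careful case analysis on the length of the conjugating word.
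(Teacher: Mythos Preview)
Your reverse direction matches the paper exactly. For the forward direction, however, the paper takes a completely different and much shorter route: it invokes Scorza's classical theorem that a group is a union of three proper subgroups if and only if it surjects onto the Vierergruppe. Given proper $A,B\le G$ with $G$ having no Vierergruppe quotient, the subgroup $\langle G\setminus(A\cup B)\rangle$ together with $A$ and $B$ covers $G$; by Scorza it cannot be proper, so $G\setminus(A\cup B)$ generates $G$, and one extracts a finite generating subset. No replay of Claims 1--3 is needed.

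Your approach of rerunning Claim~3 and then enlarging $N$ to $N'=N_G(H_1,a^2,(as_1)^2)$ is a legitimate strategy, but the gap you flag is real and not merely cosmetic. You have shown $(as_1)^2\in A\cap B$, but the passage to the \emph{normal closure} staying inside $A\cap B$ is exactly the same kind of leap that already makes the paper's derivation of $N_G(H_1,a^2)\subseteq A\cap B$ from (1)--(5) somewhat compressed; adding another generator does not make it easier. A cleaner way to close the gap (rather than the word-length case analysis you propose) is to first observe that in the Claim~3 situation one necessarily has $a\in A\setminus B$ (if $a\in A\cap B$ then $as_i\in B$ forces $s_i\in B$, contradiction), and then to work in the quotient $G/N\cong D_k$: since $as_1\in B$ one gets $\langle \bar a\bar b\rangle\subseteq B/N$, and since $(as_1)^2\in A$ while $as_1\notin A$ one gets $\langle(\bar a\bar b)^2\rangle\subseteq A/N$ with $\bar a\bar b\notin A/N$, which forces $k$ to be even. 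Then $D_k$ itself surjects onto $V$, and you are done without ever needing $N'\subseteq A\cap B$ directly. Your non-degeneracy check for $\bar a=1$ is also slightly off: the contradiction comes from $a\in N'\subseteq B$ against $a\notin B$, not from anything about $s_1$.

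In short: your plan can be made to work but is substantially heavier than the paper's two-line Scorza argument, and the part you defer is where most of the remaining work lies.
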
   
  
 \begin{rem} Proposition \ref{thm:ABnew} (hence also Proposition \ref{thm:AB}, but not Proposition \ref {thm:atmostone}!) also follows from the classical result of Scorza (\cite{Bh}, \cite {Z} )  which states that a group is a union of three proper subgroups if and only if it has a quotient isomorphic to Klein's Vierergruppe (we are grateful to the anonymous referee for pointing this out). Indeed, we just need to show the ``if`` direction,  i.e. let $G$ be a finitely generated group having no quotient isomorphic to $\Z /2\Z \times \Z/2\Z$ and let $A, B$ be proper subgroups. Let also $S_0$ be a finite generating set of $G$.  By Scorza's Theorem, the subgroup of $G$ generated by $ G\backslash (A\cup B)$ is not proper (because this subgroup, together with $A$ and $B$ cover the entire $G$) hence the set  $ G\backslash (A\cup B)$ generates $G$. Thus $G\backslash (A\cup B)$ has a finite subset $S$ which generates every element of $S_0$. Then $S$ generates $G$ and $S\subset G\backslash (A\cup B)$. 
 \end{rem}
 
 \medskip

\section{Proof of Theorem \ref{thm:proper}}

 First, we prove the following claim which shows the use of  Proposition \ref{thm:AB}. 
 
 \begin{prop} \label{thm:generic} Let $G$ be a group with a finite generating set $S$, and $G^{\ast}_{\phi}=(G,A,B,t)$ be an HNN extension where $A, B$ are proper subgroups and $(A\cup B)\cap S = \emptyset $. Then $girth(G_{\phi}^{\ast}) = \infty $.     
 \end{prop}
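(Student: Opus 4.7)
The plan is to show $girth(\Gamma) = \infty$ by exhibiting, for every integer $N \geq 2$, a finite generating set $T_N$ of $\Gamma$ with $girth(\Gamma, T_N) \geq N$. Writing $S = \{s_1, \ldots, s_k\}$, I would choose
\[
T_N \;:=\; \{\,t\,\} \,\cup\, \bigl\{\,s_i\, t^{N^i} \,:\, 1 \leq i \leq k\,\bigr\},
\]
which generates $\Gamma$ because $s_i = (s_i t^{N^i})\cdot t^{-N^i}$ for every $i$.

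Suppose, toward contradiction, that $r = z_1 \cdots z_m$ is a freely reduced word over $T_N^{\pm 1}$ of length $m < N$ with $r =_{\Gamma} e$. I would expand each letter of $r$ into its syllables over $S^{\pm 1} \cup \{t^{\pm 1}\}$ to obtain a word $\widetilde{r} =_\Gamma e$. The key structural observation is that in the HNN syllabic decomposition
\[
\widetilde{r} \;=\; g_0\, t^{\eta_1}\, g_1 \cdots t^{\eta_L}\, g_L,
\]
each intermediate $G$-piece $g_i$ is produced at a boundary between two consecutive $T_N$-generators, and a direct case analysis shows that $g_i$ is either empty, a single letter $s_j^{\pm 1}$, or a length-two product $s_j^{\pm 1} s_l^{\mp 1}$. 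By Britton's Lemma, since $\widetilde{r}$ contains some $t$-syllable, the equality $\widetilde{r} =_\Gamma e$ forces a pinch, i.e.\ an index $i$ with $\eta_i = -\eta_{i+1}$ and $g_i \in A$ (if $\eta_i = -1$) or $g_i \in B$ (if $\eta_i = +1$). The hypothesis $S \cap (A \cup B) = \emptyset$ rules out every single-letter pinch immediately.

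To exclude empty and length-two pinches, I would combine two ingredients. First, the total $t$-exponent of $\widetilde{r}$ is invariant under Britton reductions; writing it as $\sum_{k \geq 0} c_k N^k$ with $|c_k| \leq m < N$, the uniqueness of base-$N$ representation forces $c_k = 0$ for every $k$, so each generator of $T_N$ occurs with balanced signs in $r$ (and in particular no odd-length relation of length less than $N$ can exist). Second, for length-two pinches with $g_i = s_j^{\pm 1} s_l^{\mp 1} \in A \cup B$, each Britton reduction replaces $g_i$ by $\phi^{\pm 1}(g_i)$ and strips one $t$-letter from each flanking $t$-block; the cascade of such reductions in a word of $T_N$-length $m < N$ can consume only a bounded number of $t$-letters, whereas the total $t$-mass in $\widetilde{r}$ is at least $N^k \geq N$ whenever some $y_k$-generator appears in $r$. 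The main obstacle will be the careful bookkeeping required for this last step---tracking how the $G$-pieces transform under iterated $\phi$ and how pinch reductions eat into the $t$-blocks---but once it is in place one concludes $\widetilde{r} \neq_\Gamma e$, contradicting the assumption and yielding $girth(\Gamma, T_N) \geq N$.
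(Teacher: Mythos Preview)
Your generating set $T_N=\{t\}\cup\{s_it^{N^i}\}$ does produce $G$-pieces of length at most two, and the base-$N$ argument correctly forces every generator to occur with balanced signs. The gap is in the last step. The length-two pieces arise exactly when an inverse generator is immediately followed by a positive one, giving $t^{-N^i}\,(s_i^{-1}s_j)\,t^{N^j}$ with $i\neq j$; if $s_i^{-1}s_j\in A$ (which the hypothesis $S\cap(A\cup B)=\emptyset$ does \emph{not} exclude), Britton reduction replaces this by $t^{-N^i+1}\,\phi(s_i^{-1}s_j)\,t^{N^j-1}$. Nothing prevents $\phi(s_i^{-1}s_j)$ from again lying in $A$, and after a full cascade the flanking $t$-blocks can be completely consumed and the resulting $G$-piece merged with its neighbours. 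Your assertion that ``the cascade \ldots\ can consume only a bounded number of $t$-letters'' is not justified: each reduction removes two $t$-letters, but the number of reductions is governed by the sizes $N^i$ of the $t$-blocks, not by the $T_N$-length $m$. The ``careful bookkeeping'' you flag as an obstacle is in fact the entire content of the proof, and I do not see how to carry it out for this choice of $T_N$.

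The paper sidesteps the whole issue by a different choice of generators: for $r\geq 2$ it takes
\[
S^{(r)}=\{\,t,\ t^{r}s_1t^{-2r},\ t^{3r}s_2t^{-4r},\ \dots,\ t^{(2n-1)r}s_nt^{-2nr}\,\}.
\]
Because each $s_i$ is flanked on \emph{both} sides by $t$-blocks, any product of these generators (and $t^{\pm1}$'s) leaves every $s$-letter isolated; the odd/even pattern $(2i-1)r,\,2ir$ guarantees that in a word of length $<r$ the $t$-block between any two consecutive $s$-letters has nonzero exponent. Thus every $G$-piece in the syllable form is a single $s_i^{\pm1}\notin A\cup B$, no pinch is possible, and Britton's Lemma finishes the proof in one line. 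If you want to repair your argument, the simplest fix is to adopt this two-sided conjugation trick rather than the one-sided $s_it^{N^i}$.
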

 
  \begin{proof} The proof is a direct application of Britton's Lemma \cite{Brit}. \footnote{Britton's Lemma states that in an HNN extension $(G,A,B,t)$, if a word $w$ can be expressed $w = g_0t^{\epsilon _1}g_1\dots ...t^{\epsilon _1}g_n, n \geq 1$, with no subwords of the form $t^{-1}g_it, g_i\in A$, or $tg_jt^{-1}, g_j\in B$, then $w\neq 1$.}    Letting $S = \{s_1, \dots , s_n\}$ where $1\notin S$ for any $r\geq 2$, we can take $S^{(r)} = \{t, t^rs_1t^{-2r}, t^{3r}s_2t^{-4r}, \dots , t^{(2n-1)r}s_nt^{-2nr}\}$. By Britton's Lemma, $girth(G^{\ast}_{\phi}, S^{(r)}) \geq r$ and since $r\geq 2$ is arbitrary, we conclude that $girth(G^{\ast}_{\phi}) = \infty $.
  \end{proof}
  
 We already know that we cannot always satisfy the condition of Proposition \ref{thm:generic}. The following proposition takes care of the situation not covered by  Proposition \ref{thm:generic}.  

 \begin{prop} \label{thm:nongeneric} Let $G$ be a group with a finite generating set $S$, and $G^{\ast}_{\phi}=(G,A,B,t)$ be an HNN extension where $A, B$ are proper subgroups and $|(A\cup B)\cap S| = 1$ and $\min \{|S\cap (A\backslash B)|, |S\cap (B\backslash A)|\} = 0$. Then $girth(G_{\phi}^{\ast}) = \infty $.   

\end{prop}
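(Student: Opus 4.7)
First, we reduce to the case $s_1\in A$ by swapping $A\leftrightarrow B$ (equivalently $t\leftrightarrow t^{-1}$) if necessary; the other generators $s_2,\dots,s_n$ all lie outside $A\cup B$. Note that $n\geq 2$: if $G=\langle s_1\rangle$ then $A\supseteq\langle s_1\rangle=G$, contradicting properness of $A$. The plan is to imitate the construction in Proposition \ref{thm:generic} but to shield the dangerous generator $s_1$ with large \emph{same-signed} $t$-exponents. For each $r\geq 2$ set
\[
S^{(r)}=\{t,\,y_1,\,y_2,\dots,y_n\},\qquad y_1=t^{Nr}\,s_1\,t^{Nr},\qquad y_i=t^{(2i-1)r}\,s_i\,t^{-2ir}\ \ (i\geq 2),
\]
where $N=2n+2$; this set evidently generates $G^*_\phi$, since each $s_i$ and $t$ is recoverable.

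Given a reduced word $w$ in $S^{(r)}$ of length $\ell<r$, the idea is to expand $w$ in the HNN alphabet and apply Britton's Lemma in three steps. First, one verifies that between any two consecutive occurrences of $s_j^{\pm 1}$ and $s_k^{\pm 1}$ in the expansion (possibly separated by some $y_0^{\pm 1}$'s of total signed count $\sigma$, with $|\sigma|\leq\ell-2<r$), the intervening $t$-block is nonzero: the ``base'' gap determined by the juxtaposed pair of $y$'s is always a nonzero integer multiple of $r$ by a direct case analysis on signs and indices, and $|\sigma|<r$ cannot flip its sign. Hence the $G$-letters appearing in Britton normal form are singletons $s_j^{\pm 1}$. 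Second, since $s_i\notin A\cup B$ for $i\geq 2$, no Britton pinch can occur at such positions; pinches may arise only at $s_1^{\pm 1}$ letters. Third, a pinch $t^{-1}s_1^\epsilon t\mapsto \phi(s_1^\epsilon)$ (or the companion $ts_1^\epsilon t^{-1}\mapsto \phi^{-1}(s_1^\epsilon)$ when $s_1\in A\cap B$) may cascade along the $\phi$-orbit, but by the choice $N=2n+2$ the $t^{Nr}$-block inside each $y_1$ strictly dominates the $t$-contribution from neighboring $y_i$'s ($i\geq 2$) and from intermediate $y_0$-shifts, so each cascade is confined to a local pocket and the outer $t^{Nr}$-reservoirs cannot be fully exhausted. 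At least one $t$-letter survives in the final Britton form, so $m\geq 1$ and therefore $w\neq 1$ in $G^*_\phi$ by Britton's Lemma.

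The main obstacle is the cascade analysis in the third step: when two $y_1^{\pm 1}$'s are juxtaposed with a small intermediate $y_0$-shift, a single pinch can initiate a chain of further pinches whose cumulative $t$-consumption threatens the outer $t^{Nr}$-reservoirs of the involved $y_1$'s. The resolution relies on careful local bookkeeping: the cascade depth at any single pocket is bounded by the minimum of its two adjacent $t$-blocks, and chained cascades across successive pockets cannot propagate past one $Nr$-block on either side of the cascade origin. Summing over the at most $\ell<r$ cascade origins present in $w$, the surviving $t$-count remains positive. This establishes $girth(G^*_\phi, S^{(r)})\geq r$ for every $r\geq 2$, and letting $r\to\infty$ yields $girth(G^*_\phi)=\infty$.
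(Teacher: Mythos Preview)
Your construction differs from the paper's in a crucial way, and the difference creates a gap you have not closed. The paper does \emph{not} conjugate the exceptional generator by large powers of $t$; instead it sets
\[
u=t^{-(2n-3)r}s_{n-1}t^{(2n-2)r},\qquad y_n=u^{r}s_n u^{-2r},
\]
using one of the \emph{safe} generators $s_{n-1}\notin A\cup B$ as a buffer. The effect is that in any reduced word in $S^{(r)}$, each occurrence of $s_n^{\pm1}$ is flanked on both sides by occurrences of $s_{n-1}^{\pm1}$, and the $t$-blocks immediately adjacent to $s_n^{\pm1}$ are \emph{fixed} at $t^{(2n-2)r}$ on the left and $t^{-(2n-2)r}$ on the right, completely insulated from whatever happens at the junctions between the $y$'s. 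Since $s_n\notin B$, the pattern $t^{+}s_n^{\pm1}t^{-}$ is never a Britton pinch; hence there are \emph{no} pinches anywhere, and Britton's Lemma applies with no cascade analysis at all.

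Your choice $y_1=t^{Nr}s_1t^{Nr}$ sacrifices exactly this shielding. At a junction of the form $y_1^{-1}\,t^{\sigma}\,y_1$ (a legal reduced word in $S^{(r)}$ for any $0<|\sigma|<r$), the $t$-block immediately to the left of $s_1$ collapses to $t^{\sigma}$, and for $\sigma<0$ you get a genuine pinch $t^{-1}s_1t=\phi(s_1)$. From that point on the $G$-letter in play is no longer $s_1$ but some $\phi$-iterate, over which the hypotheses give you no control: nothing prevents $\phi(s_1),\phi^2(s_1),\ldots$ from all lying in $A$, in which case a single junction already triggers a cascade consuming $\Theta(Nr)$ letters $t$. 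Your sentences about ``careful local bookkeeping'' and ``chained cascades cannot propagate past one $Nr$-block'' are assertions, not arguments, and the final accounting (``summing over at most $\ell<r$ cascade origins'') is off by a factor of $r$ once each origin can cost $Nr$ rather than $O(1)$. The paper's device of interposing a safe generator is precisely what eliminates this difficulty, and your sketch supplies no substitute for it.
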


\begin{proof}  Let $S = \{s_1, \dots , s_n\}$ where $s_i\notin A\cup B, 1\leq i\leq n-1$. Without loss of generality, we may assume that $|S\cap (B\backslash A)|\} = 0$. Then $s_n\notin B$. By Britton's Lemma, for any $r\geq 2$, there exists no relation of length less than $r$ among the elements of the generating set 
$$S^{(r)} = \{t, t^{-r}s_1t^{2r}, t^{-3r}s_2t^{4r}, \dots , t^{-(2n-3)r}s_{n-1}t^{(2n-2)r},  u^rs_nu^{-2r}\}$$ where $u = t^{-(2n-3)r}s_{n-1}t^{(2n-2)r}$.

  Indeed, in any reduced non-trivial word of length less than $r$ in the alphabet of $S^{(r)}$ , written as a reduced word in the alphabet $\{t, s_1, \dots , s_n\}$, there is no subword of the form $t^{-1}s_n^{\pm 1}t$ hence Britton's lemma still applies.    
\end{proof}

\medskip 

 The proof of Theorem \ref{thm:proper} follows from Proposition \ref{thm:AB}, Proposition \ref{thm:atmostone}, Proposition \ref{thm:generic} and Proposition \ref{thm:nongeneric}. 

\medskip

   Although we are done with the proof of Theorem \ref{thm:proper}, we still would like to prove separately in more explicit terms the fact that a proper HNN extension of $D_{\infty}$ has infinite girth. Notice that $D_{\infty }$ is an example of a group when we are unable to separate the generating set $S$ from the union $A\cup B$.

\begin{prop} \label{thm:D}
Let $D_q, q\in (\mathbb{N}\backslash \{0, 1\})\cup \{\infty \}$ be a dihedral group with standard generators $a, b$, $A$ and $B$ be two proper isomorphic subgroups. Then, $girth((D_q, A,B,t))=\infty$ for all  $q\in (\mathbb{N}\backslash \{0, 1\})\cup \{\infty \}$ .
\end{prop}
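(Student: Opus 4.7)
The plan is to exhibit, for each $r\geq 2$, a finite generating set $S^{(r)}$ of $\Gamma = (D_q, A, B, t)$ admitting no non-trivial relation of length less than $r$; since $r$ is arbitrary this will force $\mathrm{girth}(\Gamma) = \infty$. The strategy is to reduce to Proposition \ref{thm:generic} or Proposition \ref{thm:nongeneric} by producing a suitable generating set of $D_q$ itself and then lifting it to $\Gamma$ via the explicit constructions from those propositions.

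Every proper subgroup of $D_q$ is either cyclic and contained in the rotation subgroup $\langle ab\rangle$, or dihedral and contains at least one reflection. In the first situation we have $A\cup B\subseteq \langle ab\rangle$, so the standard pair $\{a,b\}$ satisfies $\{a,b\}\cap (A\cup B)=\emptyset$, and Proposition \ref{thm:generic} immediately produces the required generating set $S^{(r)}=\{t,\,t^{r}at^{-2r},\,t^{3r}bt^{-4r}\}$ with $\mathrm{girth}(\Gamma,S^{(r)})\geq r$.

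In the remaining case, when at least one of $A, B$ is a proper dihedral subgroup of $D_q$, I would switch to the generating set $\{a, ab\}$ of $D_q$. The rotation part of any proper dihedral subgroup of $D_q$ is contained in $\langle (ab)^{d}\rangle$ for some $d\geq 2$, so the primitive rotation $ab$ lies outside it; and $ab$ is not a reflection. Hence $ab\notin A\cup B$ and $|\{a,ab\}\cap (A\cup B)|\leq 1$. If this intersection is empty, Proposition \ref{thm:generic} applies again; if it equals $\{a\}$, then -- possibly after replacing $a$ by another reflection $(ab)^{k}a$ chosen so that the new reflection together with $ab$ still generates $D_q$ and lies in the symmetric difference $A\triangle B$ -- both conditions $|S\cap (A\cup B)|=1$ and $\min\{|S\cap(A\setminus B)|,\,|S\cap(B\setminus A)|\}=0$ hold, and Proposition \ref{thm:nongeneric} supplies the desired $S^{(r)}$. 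The small degenerate case $D_2$, where the cyclic/dihedral distinction collapses, is handled by a direct check using $\{a,b\}$ or $\{b,ab\}$.

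The main technical obstacle I anticipate is justifying the existence of a good exponent $k$ in the replacement step. The reflections in each proper dihedral subgroup of $D_q$ form an arithmetic progression among the $q$ (or infinitely many) reflections of $D_q$; if $A\neq B$ as subgroups, the two progressions differ, so their symmetric difference is non-empty and some $k$ lands in it. I must additionally impose $\gcd(k+1,q)=1$ (vacuous when $q=\infty$) so that $\{(ab)^{k}a,\,ab\}$ still generates all of $D_q$, but this is easy to arrange alongside the symmetric-difference condition. In the extremal configuration of Remark \ref{thm:KL}, where the reflections of $A$ and of $B$ jointly exhaust all reflections of $D_\infty$, the original pair $\{a,ab\}$ already does the job because $a\in K\setminus L$.
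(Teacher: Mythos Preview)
Your overall strategy---reduce every case to Proposition~\ref{thm:generic} or Proposition~\ref{thm:nongeneric} by choosing a clever two-element generating set of $D_q$---is genuinely different from the paper's route. In the case where $a\in A$ and $b\in B$ (both proper dihedral), the paper does \emph{not} reduce to those propositions; it instead builds the ad hoc generating set $\{t,\,t^{r}at^{-2r},\,t^{-r}bt^{2r}\}$ and checks a ``suitability'' condition directly via Britton's Lemma. Your idea of replacing $\{a,b\}$ by $\{a,ab\}$ (or $\{(ab)^{k}a,\,ab\}$) so that only one generator meets $A\cup B$ is more economical and avoids that separate computation---but the argument as written has a real gap.

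The gap is the sub-case $a\in A\cap B$. Your replacement step looks for a reflection in $A\triangle B$, and you justify its existence only under the hypothesis $A\neq B$. But nothing in the statement forbids $A=B$: for instance $A=B=\langle a,(ab)^{d}\rangle$ with $d\geq 2$ and $\phi$ any nontrivial automorphism of that copy of $D_{q/d}$ gives a perfectly good proper HNN extension, yet $A\triangle B=\emptyset$, so your replacement is impossible. Moreover, Proposition~\ref{thm:nongeneric} cannot be invoked with the unique intersection element lying in $A\cap B$: its proof uses $s_n\notin B$ (or symmetrically $s_n\notin A$), which fails precisely here. The fix is easy and in fact cleaner than what you propose: whenever $a\in A\cap B$, both $A$ and $B$ contain the reflection $s=a$, hence both have the form $\langle a,(ab)^{d_i}\rangle$ with $d_i\geq 2$; then the reflection $b=(ab)^{1}a\cdot(ab)^{-1}\cdot\ldots$ (concretely, $sr$ in rotation-reflection coordinates) lies in neither, so $\{b,ab\}$ is disjoint from $A\cup B$ and Proposition~\ref{thm:generic} applies directly. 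You should split the ``replacement'' step into: (i) if some reflection avoids $A\cup B$, use it and invoke Proposition~\ref{thm:generic}; (ii) otherwise every reflection lies in $A\cup B$, which forces $a\in A\triangle B$ already (the two reflection-sets are complementary index-$2$ cosets), so no replacement is needed.

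Two minor points. First, your dichotomy ``cyclic inside $\langle ab\rangle$, or dihedral'' omits the order-$2$ reflection subgroups $\langle (ab)^{k}a\rangle$; these are neither (under the paper's convention $q\geq 2$ for $D_q$). Your second-case argument actually covers them once you phrase the split as ``$A\cup B\subseteq\langle ab\rangle$'' versus ``$A\cup B$ contains a reflection'', so just say that. Second, the condition $\gcd(k+1,q)=1$ is superfluous: any single reflection together with the primitive rotation $ab$ already generates $D_q$, since $a=(ab)^{-k}\cdot(ab)^{k}a$.
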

\begin{proof}
We break the proof into the following two cases:
\medskip

\textbf{Case 1:} If $A$ and $B$ are proper cyclic isomorphic subgroups of $D_q$, then the hypothesis of Proposition \ref{thm:generic} can still be arranged. It is easy to see that any proper cyclic subgroup of $D_q$ is isomorphic either to a cyclic group of size $q$ or to $\mathbb{Z}/2\mathbb{Z}$. Let $A=\langle w_1\rangle; B=\langle w_2 \rangle$ be two proper cyclic subgroups of $D_q$. Then we have the following sub-cases:
\medskip

\textbf{Sub-case 1:} For the word length $l(w_1)>1, l(w_2)>1$, $S=\{a,b\}$ works, that is $S\cap (A\cup B)=\emptyset$.
\medskip

\textbf{Sub-case 2:} For $l(w_1)=1, l(w_2)>1$, let $w_1=\{a\}$ then the generating set $S=\{b,ab\}$ does not intersects $A$ and $B$. 
\medskip

\textbf{Sub-case 3:} For $l(w_1)=1, l(w_2)=1$. Choose $S=\{aba,ab\}$. 
\medskip

Hence, we see that the arrangements of Proposition \ref{thm:generic} still happen in the above sub-cases. 
\medskip

\medskip

\textbf{Case 2:} $A$ and $B$ are non-cyclic isomorphic subgroups of $D_q$. 

\medskip 

 Any non-cyclic subgroup of $D_{\infty}$ is isomorphic to $D_{\infty}$; such a subgroup will be of the form $G_{m,n} = \langle a(ba)^m, b(ab)^n \rangle $ where $m,n\in \mathbb{N}\cup \{0\}$. Notice that $G_{0,0} = D_\infty$ and all other subgroups are proper. Similarly, any non-cyclic subgroup of $D_q$ for a finite $q\geq 2$ is isomorphic to $D_{q_1}$ where $q_1|q$, and these subgroups are also of the form $G_{m,n} = \langle a(ba)^m, b(ab)^n \rangle $ for some $m,n\geq 0$. Let $S = \{a, b\}, A= G_{m,n}, B = G_{k,l}$ where $\max \{m,n\} \geq 1$ and $\max \{k,l\} \geq 1$. 
 
 \medskip 
 
  If $A\cap S = B\cap S = \emptyset $, then the hypothesis of Proposition \ref{thm:generic} is satisfied hence  $girth((D_{q}, A,B,t))=\infty$.
 
 \medskip 
 
  If $A\cap S = \emptyset $ but $B\cap S \neq \emptyset $, then without loss of generality we may assume $b\in B\cap S$. Then $a\notin S$ and replacing $S$ with $S' = \{a,ab\}$ we again satisfy the hypothesis of Proposition \ref{thm:AB}. 
  
  \medskip 
  
   The case of $B\cap S = \emptyset $ but $A\cap S \neq \emptyset $ is treated similarly. Thus we are left the case when $A\cap S \neq \emptyset $ and $B\cap S \neq \emptyset $. Then we may assume $a\in A, b\in B$ and $t^{-1}at = u, tbt^{-1} = v$ where $u = a(ba)^m, v = b(ab)^n, m, n\geq 1$. For every $r\geq 1$, we let $S^{(r)} = \{t, t^rat^{-2r}, t^{-r}bt^{2r}\}$. We consider words in the alphabet $t, X = t^rat^{-2r}, Y = t^{-r}bt^{2r}$. 
   
   \medskip 
   
   Notice that for all $n\geq 1$, $$X^n = t^rW_1t^{-(n+1)r}, X^{-n} = t^{(n+1)r}W_2t^{-r}, Y^{-n} = t^rW_3t^{-(n+1)r}, Y^{n} = t^{(n+1)r}W_4t^{-r}$$ where $W_i, 1\leq i\leq 4$ are {\em suitable} in the sense that it can be written as $W_i = u_1t^{p_1}u_2t^{p_2}\dots u_kt^{p_k}u_{k+1}$ with $p_i, 1\leq i\leq k$ being non-zero integers such that if $u_i\in A$, then $p_{i-1} > 0$ (if $i\geq 1), p_i < 0$, and if $u_i\in B$, then $p_{i-1} < 0$ (if $i\geq 1), p_i >0$. Then any word of length less than $r$ in the alphabet $\{t,X,Y\}$ will be still suitable, hence by Britton's Lemma such a word is not identity.     
\end{proof}

\section{Proof of Proposition \ref{thm:semiproper}}

 Given a semi-proper HNN extension $\Gamma = (G, A, B, t)$ with $A=G$, we can form a union $\mathcal{G} = \displaystyle \mathop{\bigcup }_{n\in \mathbb{Z}}t^nGt^{-n}$. Notice that, since $tGt^{-1} = B$, we will have a two-sided infinite chain $$\dots < t^{-2}Gt^2 < t^{-1}Gt < G < tGt^{-1} < t^2Gt^{-2} < \dots $$ of strict inclusions. Then  $\mathcal{G}$ is a normal subgroup and $\Gamma /\mathcal{G} \cong \mathbb{Z}$. 
 
 \medskip 
 
 Now, let $G$ be a group satisfying a law. Then $G$ satisfies a law $W(x,y)$ in two variables. To show the claim of Proposition \ref{thm:semiproper}, we just need to observe that then for all $n\in \mathbb{Z}$, $t^nGt^{-n}$ also satisfies the law $W(x,y)$ hence by the strict inclusions, the normal subgroup $\mathcal{G}$ satisfies $W(x,y)$ as well. On the other hand, the infinite cyclic group $\mathbb{Z}$ also satisfies a law (e.g. $[x,y] = 1$ in $\mathbb{Z}$). It remains to notice that for any short exact sequence $1\to L \to H \to K \to 1$ of groups, $H$ satisfies a law iff $L$ and $K$ satisfy a law. This finishes the proof of Proposition \ref{thm:semiproper}. 
 
 \medskip 
 
 In the above proof we indeed realized the HNN extension $(G, A, B, t)$ as a semi-direct product $\mathbb{Z}\ltimes_{t} \mathcal{G}$. The normal subgroup $\mathcal{G}$ is a particularly meaningful object in the case when $G$ is a finitely-generated nilpotent group. For example, if $G$ is also torsion-free, then $\mathcal{G}$ naturally lies inside the Malcev completion of $G$ (see \cite{Mal}) and one can also treat a general case of a finitely-generated nilpotent group with possibly some torsions.   
 
  \medskip 
  
 Let us recall that torsion elements of a nilpotent group $G$ form a subgroup, called {\em the torsion subgroup}. We will write $Tor(G)$ to denote the torsion subgroup. In addition, if $G$ is finitely generated then the torsion subgroup $Tor(G)$ is finite and normal, moreover, the quotient $G/Tor(G)$ is torsion-free. Now, if we have isomorphic subgroups $A, B \leq G$ with $A = G$, then necessarily $Tor(B) = Tor(A) = Tor(G)$. Hence, for a semi-proper HNN extension $(G, A, B, t)$ where the conjugation by $t$ is given by an isomorphism $\phi : A\to B$ (i.e. by $\phi : G\to B$) then $\phi |_{Tor(G)}:Tor(G)\to Tor(B)$ is an isomorphism and we also obtain an induced isomorphism $\phi _1:G/Tor(G)\to B/Tor(G) $. Then the group $(G, A, B, t)$ admits a normal subgroup $\mathbb{Z}\ltimes_{\phi } Tor(G)$ whereas the quotient by this normal subgroup is isomorphic to $(G/Tor(G), A/Tor(G), B/Tor(G), t_1)$ where the conjugation by $t_1$ is given by the isomorphism $\phi _1$.
 
 \medskip 
 
  Let $G_1 = G/Tor(G), A_1 = A/Tor(G), B_1 = B/Tor(G)$. We consider the HNN extension $(G_1, A_1, B_1, t_1)$ given by the isomorphism $\phi _1$. Notice that this is a semi-proper HNN extension since $A_1 = G_1$, moreover, $G_1$ is torsion-free. Now we use the fact that a finitely generated torsion-free nilpotent group $H$ admits a Malcev completion $\overline{H}$ which is also nilpotent of the same nilpotency degree, moreover, any monomorphism $\psi :H\to H$ can be extended to an isomorphism $\overline{\psi }:\overline{H}\to \overline{H}$. Thus the monomorphism $\phi _1:G_1\to G_1$ can be extended to an isomorphism $\overline{\phi _1}:\overline{G_1}\to \overline{G_1}$. Then the HNN extension $(G_1, A_1, B_1, t_1)$ is a subgroup of $(\overline{G_1}, \overline{A_1}, \overline{B_1}, t_2)$ where the conjugation by $t_2$ is given by the isomorphism $\overline{\phi _1}$. Hence the group $(\overline{G_1}, \overline{A_1}, \overline{B_1}, t_2)$ is a semidirect product $\mathbb{Z}\ltimes_{\overline{\phi _1}} \overline{G_1}$. Hence the original HNN extension $(G, A, B, t)$ is a subgroup of a nilpotent extension of a nilpotent group. Since $(G, A, B, t)$ is not infinite cyclic, in particular, we again see that it has a finite girth.
  
  \medskip

 A great example of a Malcev completion can be described for an integral Heisenberg group $H_{\mathbb{Z}}=<x,y|[[x,y],x]=[[x,y],y]=1>$. This group is isomorphic to the group $U_3(\mathbb{Z})$ of integral unipotent matrices of size $3\times 3$. The Malcev closure of $H_{\mathbb{Z}}$ will be equal to $U_3(\mathbb{R})$, the group of real unipotent matrices of size $3\times 3$.  A semi-proper HNN extension of $H_{\mathbb{Z}}$ will be a subgroup of $\mathbb{Z}\ltimes_{\phi} U_3(\mathbb{R})$. 

\medskip 

However, note that for proper extensions of nilpotent groups this construction fails. Indeed, the following proper HNN extension of $\mathbb{Z}^2=\langle a,b\rangle$ has infinite girth, which is in support to our Theorem \ref{thm:proper}, $(\mathbb{Z}^2,\langle b^{-1} \rangle, \langle a^nb^{-1} \rangle,t)$ for any $n\in \mathbb{Z}$ with $\phi$, $$\phi:\langle b^{-1} \rangle \to \langle a^nb^{-1}\rangle$$ Then, $$(\mathbb{Z}^2)^{\ast}_{\phi}=\langle a,b,t|[a,b]=e,t^{-1}b^{-1}t=a^nb^{-1} \rangle=\langle a,b,t| b^{-1}ab=a, b^{-1}tb=ta^n \rangle=\mathbb{F}_2\rtimes_{\phi} \mathbb{Z}$$ But, it follows from Proposition \ref{thm:free} that $$girth(\mathbb{F}_2\rtimes_{\phi} \mathbb{Z})=\infty$$ 

\section{Proof of Proposition \ref{thm:amalgamation}}

 Given some groups $A, B, C$ with monomorphisms $\phi :C\to A$ and $\psi :C\to B$, one can form a product of $A$ and $B$ amalgamated over $C$. We will write this as $A*_CB$ dropping $\phi $ and $\psi $ from the notation as they will be given to us in the context. It turns out that $A, B, C$ will have isomorphic images in $G = A*_CB$ which we still denote with the same letters. We will be using the following well known analog of Britton's Lemma for amalgamated free products: Let $T_A = A\backslash C, T_B = B\backslash C$ and $w = g_0g_1\dots ...g_n, n \geq 1$ such that for all $1\leq  i \leq n$ if 
 
 (i) if $g_{i-1}\in A$, then $g_i\in T_B$
 
 (ii) if  $g_{i-1}\in B$, then $g_i\in T_A$
 
 (iii) $g_0\neq 1$

 Then $w\neq 1\in  A*_CB$.

 \medskip 
 
 We will call the amalgamated free product {\em proper} if $C$ is a proper subgroup of both $A$ and $B$. Notice that if $C = A$ ($C = B$) then $G$ becomes isomorphic to $B$ (to $A$) so the girth$(G) = \infty $ iff girth$(B) = \infty $ (girth$(A) = \infty $).

 \medskip
 
Let $G = A*_CB$ where $A, B$ are finitely generated groups. If $C$ is trivial, then $G = A*B$ and in this simpler case we can procced as follows: Let $S_1 = \{a_1, \dots , a_n\},  S_2 = \{b_1, \dots , b_m\}$ be generating sets of $A$ and $B$ respectively where $1\notin S_1, 1\notin S_2, S_i\cap S_i^{-1} = \emptyset , 1\leq i\leq 2$. If $A$ and $B$ are both cyclic groups then the claim is an easy exercise (alternatively, the group $A*B$ is word hyperbolic hence the result about the girth follows from Theorem 2.6 in \cite{azer2}), so we will assume that at least one of them, say $A$, is not cyclic. Then $n\geq 2$.  Let now $r\geq 1$. We take reduced words $U_1(X, Y, Z), \dots , U_{m+n}(X, Y, Z), V_1(X,Y,Z), \dots V_{m+n}(X,Y,Z)$ in the free group formally generated by letters $X, Y, Z$ such that for all $1\leq i, j\leq m+n$
  
  (i) $U_i$ ends with $Y$ and $V_j$ starts with $Y^{-1}$;
  
   (ii) $|U_i|=2p, |V_j| = 2p, |U_i^{\epsilon }V_j^{\delta }| > \frac{3}{2}p$ for all $\epsilon , \delta \in \{-1,1\}$ where $p > 2r$; 

(iii) $U_i$ and $V_j$ are reduced words of length at least $p$ in the alphabet $\{ \xi , \eta \}$ where $\xi = XY, \eta = ZY$.
   
   \medskip

   Now we let $S^{(r)} = S^{(r)}_1\cup S^{(r)}_2\cup \{uv, wv\}$ where $$S^{(r)}_1 = \{U_1(u,v,w)a_1V_1(u,v,w), \dots , U_n(u,v,w)a_nV_n(u,v,w)\}$$ and  $$S^{(r)}_2 = \{U_{n+1}(u,v,w)a_1b_{1}a_1V_{n+1}(u,v,w), \dots , U_{m}(u,v,w)a_1b_ma_1V_{m}(u,v,w)\}$$ where $u = a_1b_1a_1^{-1}, v = a_2b_1a_2^{-1}, w = a_1a_2b_1a_2^{-1}a_1^{-1}$.   
   
   \medskip 

 By condition (iii) we have that $U_i, V_i, 1\leq i, j \leq m+n$ are also words of length at least $p$ in $uv$ and $wv$. Then  $S^{(r)}$ is a generator of $G = A*_CB$. Notice that none of the words $$a_1, a_2, a_1^{-1}a_2, a_2a_1^{-1}, a_1^{-1}a_2a_1, a_1a_2a_1^{-1}, a_2^{-1}a_1a_2, a_2a_1a_2^{-1}$$ represents identity element in $A$, hence there is no relation of length less than $r$ among the elements of $S^{(r)}$. Thus girth$(A*B,  S^{(r)}) \geq r$.

\medskip 
   
  In the general case, when $C\neq 1$, by the index assumption of our proposition, without loss of generality, we may assume that $A:C \geq 3$. 

  We will use the following simple lemma.
  
  \begin{lem} \label{thm:involution} Let $G$ be a group and $H\leq G$ be a subgroup such that for all $x\in G\backslash H, x^2\in H$. Then $H$ is a normal subgroup. 
  \end{lem}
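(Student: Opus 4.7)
The plan is to show directly that for every $g \in G$ and every $h \in H$ one has $ghg^{-1} \in H$, splitting into two cases depending on whether $g \in H$ or $g \notin H$. The case $g \in H$ is immediate from subgroup closure, so the whole content is in the case $g \notin H$, where the hypothesis $x^2 \in H$ for $x \notin H$ must be leveraged.

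First I would make the easy but crucial side-observation: if $g \notin H$ and $h \in H$, then $gh \notin H$ as well, because $gh \in H$ would force $g = (gh)h^{-1} \in H$. Similarly $g^2 \in H$ by the standing hypothesis. This gives two things to plug in.

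Now combining these: since $gh \notin H$, the hypothesis yields $(gh)^2 = ghgh \in H$. Multiplying on the right by $h^{-1} \in H$ shows $ghg \in H$. Finally, since $g^2 \in H$ we have $g^{-2} \in H$, and therefore
\[
ghg^{-1} \;=\; (ghg)\,g^{-2} \;\in\; H,
\]
as desired. The normality of $H$ follows.

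The argument is very short; there is no real obstacle beyond arranging the algebraic identities in the right order. The only subtle step is recognizing the useful reformulation $ghg^{-1} = (ghg)g^{-2}$, which lets us combine the two consequences $(gh)^2 \in H$ and $g^2 \in H$ of the hypothesis. Note that the lemma does not require $H$ to have index two in $G$; it only needs the squaring condition, which is weaker in general (though in practice, for the intended application to amalgamated products, the relevant $H$ will often end up being a subgroup of small index).
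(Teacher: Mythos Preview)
Your proof is correct and essentially identical to the paper's own argument: both observe that $gh\notin H$, apply the hypothesis to get $(gh)^2\in H$, deduce $ghg\in H$, and then multiply by $g^{-2}\in H$ to obtain $ghg^{-1}\in H$. The only difference is that you spell out the trivial case $g\in H$ and the justification for $gh\notin H$ explicitly, whereas the paper leaves these implicit.
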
 

 \begin{proof} Indeed, let $h\in H, x\in G\backslash H$. $xh\in G\backslash H$ hence $(xh)^2\in H$ which yields $xhx\in H$. Then $xhx^{-1} = (xhx)x^{-2}\in H$.
 \end{proof}
 
  Using Lemma \ref{thm:involution}, we can claim another simple lemma. 
  
  \begin{lem} \label{thm:generset} Let $A$ be a group and $C\leq A$ with $A:C\geq 3$. Then there exists distinct $a_1, a_2\in A$ such that $a_1, a_2, a_1^{-1}a_2, a_2a_1^{-1}\notin C$ and $a_1^{-1}a_2a_1, a_1a_2a_1^{-1}, a_2^{-1}a_1a_2, a_2a_1a_2^{-1} \notin C$.
  \end{lem}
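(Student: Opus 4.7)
The plan is to split into two cases depending on whether $C$ is normal in $A$, using Lemma \ref{thm:involution} to dispose of the non-normal case cleanly.

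\textbf{Case 1 (normal case).} If $C \trianglelefteq A$, then $a_1 C a_1^{-1} = C = a_1^{-1} C a_1$ and similarly for $a_2$, so every one of the four conjugation conditions $a_1^{\pm 1} a_2 a_1^{\mp 1} \notin C$ and $a_2^{\pm 1} a_1 a_2^{\mp 1} \notin C$ is automatic from $a_1, a_2 \notin C$. Moreover left and right cosets coincide, so the conditions $a_1^{-1}a_2 \notin C$ and $a_2 a_1^{-1}\notin C$ collapse into the single condition $a_1 C \neq a_2 C$. Since $A:C \geq 3$, I can choose $a_1, a_2$ in two distinct non-trivial cosets of $C$; these are automatically distinct elements.

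\textbf{Case 2 (non-normal case).} If $C$ is not normal in $A$, then by the contrapositive of Lemma \ref{thm:involution} there exists $a \in A\setminus C$ with $a^2 \notin C$. I would then simply set $a_1 = a$ and $a_2 = a^{-1}$. Note that $a^2 \notin C$ forces $a^2 \neq 1$, so $a \neq a^{-1}$ and $a_1 \neq a_2$. All eight required non-membership conditions then reduce to checking that $a^{\pm 1}, a^{\pm 2} \notin C$: indeed $a_1, a_2, a_2^{-1}a_1a_2, a_2a_1a_2^{-1}$ all equal $a^{\pm 1}$, while $a_1^{-1}a_2, a_2a_1^{-1}, a_1^{-1}a_2a_1, a_1a_2a_1^{-1}$ all equal $a^{\pm 2}$. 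Since $a \notin C$ gives $a^{-1} \notin C$, and $a^2 \notin C$ gives $a^{-2} \notin C$, every condition holds.

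The main obstacle is really just noticing that the two cases are exhaustive and that the non-normal case admits the clever one-parameter choice $a_2 = a_1^{-1}$; once one sees that the eight conjugates only involve powers of $a$, Lemma \ref{thm:involution} delivers exactly the element needed. The index hypothesis $A:C \geq 3$ is only essential in the normal case, where one needs at least two distinct non-trivial cosets; in the non-normal case, failure of normality alone provides the required element.
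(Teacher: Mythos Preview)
Your proof is correct and follows essentially the same route as the paper: split into the normal and non-normal cases via Lemma~\ref{thm:involution}, and in the non-normal case pick both $a_1,a_2$ as powers of a single element $a$ with $a,a^2\notin C$ (the paper takes $a_1=a,\ a_2=a^2$, you take $a_1=a,\ a_2=a^{-1}$). One minor bookkeeping slip: since $a_1$ and $a_2$ commute, $a_1^{-1}a_2a_1=a_2=a^{-1}$ and $a_1a_2a_1^{-1}=a^{-1}$, so these belong to your ``$a^{\pm1}$'' list rather than the ``$a^{\pm2}$'' list---but as everything lands in $\{a^{\pm1},a^{\pm2}\}\subset A\setminus C$ the conclusion is unaffected.
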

  
   \begin{proof} If $C$ is not trivial and there exists $x\in A\backslash C$ such that  $x^2\notin C$, then we can take $a_1 = x, a_2 = x^2$; otherwise, by Lemma \ref{thm:involution}, $C$ is a normal subgroup thus we just need to find two distinct elements $d_1, d_2 \in (G/C)\backslash \{1\}$ satisfying conditions $d_1^{-1}d_2\neq 1$ but such elements trivially exist if $|G/C| \geq 3$.   
   \end{proof}

Then we can choose generating sets $S_1 = \{a_1, \dots , a_n\},  S_2 = \{b_1, \dots , b_m\}$ of $A$ and $B$ such that $S_1\cap C = S_2\cap C = \emptyset $, $n\geq 2$ and none of the elements $$a_1, a_2, a_1^{-1}a_2, a_2a_1^{-1},  a_1^{-1}a_2a_1, a_1a_2a_1^{-1}, a_2^{-1}a_1a_2, a_2a_1a_2^{-1}$$ belong to $C$. Then we define  $S^{(r)}$ as above and there will be no relation of length less than $r$ among the elements of  $S^{(r)}$. Thus we again obtain that  girth$(A*_CB,  S^{(r)}) \geq r$. Since $r$ is arbitrary, we conclude that  girth$(A*_CB) = \infty $.

  \begin{rem} Notice that a proper amalgamated free product $G = A*_CB$ is virtually solvable iff $C$ is a virtually solvable normal subgroup and $G/C\cong D_{\infty }$ so Proposition \ref{thm:amalgamation} indeed confirms the Girth Alternative for the class of proper amalgamated free products.
  \end{rem}

\bigskip 

\section{Proof of Proposition \ref{thm:free}}

Let $\Gamma = (G,A,B,t)$ be an HNN extension of a non-elementary word hyperbolic group $G$.

\medskip

 For proper HNN extension $(G,A,B,t)$, with $\phi:A\to B$ an isomorphism between proper subgroups $A,B<G$, the claim follows from Theorem \ref{thm:proper} that $girth((G,A,B,t))=\infty.$

\medskip

  We will treat the cases of proper and full HNN extensions together. As a major tool, we will consider the actions of word hyperbolic groups on their boundary. Let us recall that the boundary of a word hyperbolic group $G$ is a compact metric space, denoted as $\partial G$. $G$ acts on $\partial G$ by homeomorphisms. Torsion elements of $G\backslash \{1\}$ are called {\em elliptic} and non-torsion elements are called {\em hyperbolic}. A hyperbolic element $g$ has exactly two fixed points on $\partial G$, one of them is attractive and another one is repelling; we will denote these as $P_g$ and $R_g$ respectively.  

  \medskip 

  We will use the following proposition (See \cite{KB}). 

  \begin{prop} \label{thm:hyperbolic} Let $G$ be a non-elementary word hyperbolic group. Then 

  a) The boundary $\partial G$ is infinite;

  b) The sets $\{P_g : g \ \mathrm{is \ a \ hyperbolic \ element \ of} \ G\}$ and $\{R_g : g \ \mathrm{is \ a \ hyperbolic \ element \ of} \ G\}$ are dense in $\partial G$; 

  c) The set $\{(P_g, R_g) : g \ \mathrm{is \ a \ hyperbolic \ element \ of} \ G\}$ is dense in $\partial G\times \partial G$. 

  \end{prop}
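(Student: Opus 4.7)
The plan is to establish all three parts using two core tools: the ping-pong lemma and the North--South dynamics of hyperbolic (loxodromic) elements acting on $\partial G$. Recall that any infinite word hyperbolic group contains an element of infinite order, and any such element $g$ acts on $\partial G$ with exactly two fixed points $P_g$ (attractor) and $R_g$ (repeller); the iterates $g^n$ contract compact subsets of $\partial G \setminus \{R_g\}$ uniformly onto $P_g$, and symmetrically for $g^{-n}$.

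For (a), the key step is to produce two \emph{independent} hyperbolic elements with disjoint fixed-point pairs and then run ping-pong. Starting with any hyperbolic $g$, non-elementarity of $G$ means that the pair $\{P_g, R_g\}$ is not $G$-invariant, so there is some $k \in G$ with $k\{P_g, R_g\} \cap \{P_g, R_g\} = \emptyset$. Setting $h := kgk^{-1}$ yields a hyperbolic element with fixed-point pair disjoint from that of $g$. Applying the ping-pong lemma to small disjoint neighborhoods of $\{P_g, R_g\}$ and $\{P_h, R_h\}$ shows that $\langle g^n, h^n\rangle \cong \mathbb{F}_2$ for large $n$, and its limit set in $\partial G$ is a Cantor set; hence $\partial G$ is uncountable and in particular infinite.

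For (b), let $\Lambda_P := \overline{\{P_g : g \text{ is hyperbolic}\}} \subseteq \partial G$. This set is nonempty, closed, and $G$-invariant, since $fP_g f^{-1} = P_{fgf^{-1}}$. I would argue $\Lambda_P = \partial G$ by minimality: for any $\xi \in \partial G$ and any hyperbolic $g$ with $\xi \neq R_g$, North--South dynamics gives $g^n \xi \to P_g \in \Lambda_P$, so it suffices to know that some $G$-orbit on $\partial G$ is dense. This holds because the unique minimal closed $G$-invariant subset of $\partial G$ for a non-elementary hyperbolic group is all of $\partial G$ --- a standard consequence of the convergence-group axioms applied together with the free subgroup built in part (a), whose own limit set is a perfect $G$-saturated subset of $\partial G$. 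The statement for $\{R_g\}$ follows by replacing $g$ by $g^{-1}$.

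For (c), I would bootstrap (b) to pairs. Given a nonempty open box $U \times V \subseteq \partial G \times \partial G$, apply (b) to pick hyperbolic elements $g_0$ and $h$ with $P_{g_0} \in U$ and $P_h \in V$, arranged so that $R_h \notin \overline{U}$ and $P_{g_0} \neq R_h$. For large $N$, the hyperbolic element $h^N g_0 h^{-N}$ has attractor $h^N(P_{g_0})$ and repeller $h^N(R_{g_0})$, and by North--South contraction toward $P_h$ on compact sets avoiding $R_h$, both images approach $P_h \in V$; to instead place the attractor inside $U$ and the repeller inside $V$, one performs a two-step conjugation by two independent hyperbolic elements --- one tailored to stabilize a neighborhood of $P_{g_0} \in U$ while the other drags the repelling fixed point into $V$. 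The main obstacle is precisely this simultaneous control of the two fixed points: it is strictly stronger than (b), and amounts to the minimality of the diagonal $G$-action on $(\partial G \times \partial G) \setminus \Delta$, which is the most delicate piece of the convergence-group dynamical picture and which I would invoke as the essential technical input for closing out (c).
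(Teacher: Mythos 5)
First, a point of comparison: the paper does not prove this proposition at all --- it is quoted as a known fact with a pointer to \cite{KB} --- so there is no in-paper argument to measure yours against, and any correct self-contained proof is added value. Judged on its own terms, your sketches of (a) and (b) are the standard arguments and are essentially sound, modulo two small points you should make explicit: the passage from ``$\{P_g,R_g\}$ is not $G$-invariant'' to ``some translate of it is \emph{disjoint} from $\{P_g,R_g\}$'' uses the lemma that two hyperbolic elements of a hyperbolic group have either identical or disjoint fixed-point pairs (if they shared exactly one fixed point one contradicts properness of the action); and the equivariance you want is $f(P_g)=P_{fgf^{-1}}$ --- a boundary point is translated, not conjugated. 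Granting minimality of the $G$-action on $\partial G$, part (b) then follows correctly.

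Part (c) is where there is a genuine gap. The mechanism you propose --- conjugating $g_0$ by large powers of $h$ --- sends \emph{both} fixed points of $h^Ng_0h^{-N}$ toward $P_h$, as you yourself observe, so it cannot place the attractor in $U$ and the repeller in $V$; the ``two-step conjugation'' you then gesture at is never specified, and the fallback of invoking minimality of the diagonal action on $(\partial G\times\partial G)\setminus\Delta$ is circular, since that statement is essentially equivalent to (c). The standard repair is to use \emph{products} rather than conjugates. By (b), choose hyperbolic elements $a,b$ with $P_a\in U$ and $P_b\in V$, and, after replacing $b$ by a conjugate if necessary (using the equal-or-disjoint lemma and minimality), arrange that $P_a, P_b, R_a, R_b$ are four distinct points; fix pairwise disjoint open neighborhoods $N(P_a)\subseteq U$, $N(P_b)\subseteq V$, $N(R_a)$, $N(R_b)$. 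For $n$ large, North--South dynamics gives that $c_n:=a^nb^{-n}$ maps the complement of $N(P_b)$ into $N(P_a)$ (first $b^{-n}$ contracts it into $N(R_b)$, then $a^n$ carries $N(R_b)$ into $N(P_a)$ since $R_b\neq R_a$), while $c_n^{-1}=b^na^{-n}$ maps the complement of $N(P_a)$ into $N(P_b)$. Hence $c_n$ is a hyperbolic element with $P_{c_n}\in N(P_a)\subseteq U$ and $R_{c_n}\in N(P_b)\subseteq V$, which proves (c) with exactly the tools you already set up and no appeal to the conclusion.
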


  For convenience of the reader we will run the argument for word hyperbolic groups and then extend it to HNN extensions. The following proposition reproves a result (Theorem 2.6)  of \cite{azer2}. Making use of Proposition \ref{thm:hyperbolic}, we offer a simpler proof of this result. 
  
  \begin{prop} A non-elementary word hyperbolic group has infinite girth.  
  \end{prop}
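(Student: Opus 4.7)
The plan is to combine the staggered generating-set construction from Proposition \ref{thm:generic} with the north--south dynamics of a hyperbolic element acting on the Gromov boundary $\partial G$. Fix any finite generating set $S_0=\{g_1,\ldots,g_n\}$ of $G$. Using Proposition \ref{thm:hyperbolic}(c), I select a hyperbolic $h\in G$ whose fixed-point pair $(P_h,R_h)\in\partial G\times\partial G$ is in \emph{generic position} with respect to $S_0$: the $4n+2$ points $\{P_h,R_h\}\cup\{g_i^{\pm 1}(P_h),g_i^{\pm 1}(R_h):1\le i\le n\}$ are pairwise distinct. Such $h$ exists because the forbidden coincidences carve out a proper closed subset of $\partial G\times\partial G$, whereas hyperbolic fixed-point pairs are dense. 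By north--south dynamics, I fix disjoint open neighborhoods $U\ni P_h$ and $V\ni R_h$ with $g_i^{\pm 1}(U\cup V)$ disjoint from $U\cup V$ for each $i$, together with $N_0\ge 1$ such that $h^{N}(\partial G\setminus V)\subset U$ and $h^{-N}(\partial G\setminus U)\subset V$ whenever $N\ge N_0$. For each $r\ge N_0$, set $M=2r$ and
$$S^{(r)}=\bigl\{h,\ h^{M}g_{1}h^{-2M},\ h^{3M}g_{2}h^{-4M},\ \ldots,\ h^{(2n-1)M}g_{n}h^{-2nM}\bigr\},$$
which is manifestly a generating set of $G$.

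Given any reduced word $w$ of length strictly less than $r$ in $S^{(r)}$, I rewrite $w$ in the alphabet $\{h^{\pm 1},g_j^{\pm 1}\}$ as an alternating expression $h^{p_0}g_{i_1}^{\varepsilon_1}h^{p_1}\cdots g_{i_k}^{\varepsilon_k}h^{p_k}$. A case analysis of the four possible concatenations of two staggered letters of the form $h^{(2i-1)M}g_ih^{-2iM}$ shows that the interior $h$-exponent produced by glueing two such letters always has magnitude at least $M=2r$; since fewer than $r$ inserted $h^{\pm 1}$-letters from $S^{(r)}$ can perturb each interior exponent by less than $r$, every interior block satisfies $|p_j|\ge M-(r-1)\ge r\ge N_0$ for $1\le j\le k-1$. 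Now choose a basepoint $\xi\in\partial G\setminus\bigl(U\cup V\cup\bigcup_i g_i^{\pm 1}(U\cup V)\bigr)$ and apply the letters of $w$ to $\xi$ from right to left: each interior block $h^{p_j}$ pushes the current point into $U\cup V$, and the following $g_{i_j}^{\varepsilon_j}$ sends it into $g_{i_j}^{\varepsilon_j}(U\cup V)$, which by generic position is disjoint from $U\cup V$. Hence the orbit cannot return to $\xi$, so $w\ne 1$ and $\mathrm{girth}(G,S^{(r)})\ge r$. Since $r\ge N_0$ is arbitrary, $\mathrm{girth}(G)=\infty$.

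The main obstacle is the combinatorial lower bound on the interior $h$-exponents $p_j$: two effects tend to shrink them---hidden cancellation between adjacent staggered letters, and the accumulation of inserted $h^{\pm 1}$-letters in the $S^{(r)}$-word---and $M=2r$ must be large enough to absorb both. Degenerate cases also require direct inspection: words consisting only of $h^{\pm 1}$ are handled by the infinite order of $h$, and short boundary blocks $p_0,p_k$ are handled by choosing $\xi$ generically so that the first applied letter already sends it outside $U\cup V$ before the subsequent long interior $h$-block triggers the contraction. Once these are resolved, the ping-pong mechanism on $\partial G$ serves as the dynamical analogue of Britton's Lemma that replaces the latter in the hyperbolic setting.
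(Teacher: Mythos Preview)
Your argument is correct and takes a genuinely different route from the paper's. The paper selects \emph{two} hyperbolic elements $\beta,\gamma$ with $\bigl(\mathrm{Fix}(\beta)\cup\bigcup_i g_i^{\pm1}\mathrm{Fix}(\beta)\bigr)$ disjoint from the analogous set for $\gamma$, and uses the generating set $\{\beta^N,\gamma^N,\beta^{iNr}g_i\gamma^{iNr}:1\le i\le s\}$; the ping-pong bounces between a neighborhood of $\mathrm{Fix}(\gamma)$ and one of $\mathrm{Fix}(\beta)$. You instead use a \emph{single} hyperbolic element $h$ and transplant the staggered construction of Proposition~\ref{thm:generic} verbatim, with $h$ in the role of the stable letter $t$; your ping-pong alternates between neighborhoods of $P_h$ and $R_h$ according to the sign of each $h$-block. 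Your approach is more economical and makes the analogy with Britton's Lemma transparent; the paper's two-element setup buys simpler disjointness verification, since one only needs $\mathrm{Fix}(\beta)$ to avoid a \emph{finite} set of boundary points rather than the possibly complicated fixed sets $\mathrm{Fix}(g_i)$. Two small remarks on your write-up: first, your worry about short boundary blocks $p_0,p_k$ is unnecessary---whenever the word contains at least one non-$h$ letter, the outermost $h$-blocks already inherit an exponent of magnitude $\ge M$ from that letter's outer $h$-power, so $|p_0|,|p_k|\ge M-(r-1)$ as well. Second, the ``generic position'' step deserves a word of care: if some $g_i$ has finite order its fixed set on $\partial G$ need not be finite, so to ensure the forbidden coincidence locus is genuinely nowhere dense it is cleanest to first replace $S_0$ by a generating set consisting of hyperbolic elements (always possible in a non-elementary hyperbolic group).
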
 

  \begin{proof}
  Let $\{g_1, \dots , g_s\}$ be a finite generating set of a word hyperbolic group $G$ and $\gamma $ be a hyperbolic element of $G$. By Proposition \ref{thm:hyperbolic}, there exists a hyperbolic element $\beta \in G$ such that $$ (Fix(\beta )\cup \displaystyle \mathop{\cup }_{1\leq i\leq s}g_i^{\pm 1}Fix(\beta )) \cap (Fix(\gamma )\cup \displaystyle \mathop{\cup }_{1\leq i\leq s}g_i^{\pm 1}Fix(\gamma ))  = \emptyset .$$ 

  Since $\partial G$ is a compact metric space, it is Hausdorff, so we can take open neighborhoods $U, V$ of $Fix(\gamma ), Fix(\beta )$ respectively such that $$(U\cup \displaystyle \mathop{\cup }_{1\leq i\leq s}g_i^{\pm 1}U)\cap (V\cup \displaystyle \mathop{\cup }_{1\leq i\leq s}g_i^{\pm 1}V) = \emptyset $$ and the set $$\partial G\backslash (\overline{(U\cup \displaystyle \mathop{\cup }_{1\leq i\leq s}g_i^{\pm 1}U)\cup (V\cup \displaystyle \mathop{\cup }_{1\leq i\leq s}g_i^{\pm 1}V)})$$ is infinite. Let $P$ be any point in this set. 

\medskip 

   Since the fixed points of $\gamma $ and $\beta $ are either attractive or repelling, there exists a natural $N$ such that for all $n\geq N, \gamma ^{\pm n}(\partial G\backslash U) \subseteq U$ and $\beta ^{\pm n}(\partial G\backslash V) \subseteq V$. 

\medskip 

    Then for every $r\geq 1$ we can take a generating set $$S_r = \{\beta ^N, \gamma ^N, \beta ^{Nr}g_1\gamma ^{Nr}, \beta ^{2Nr}g_2\gamma ^{2Nr}, \dots , \beta ^{sNr}g_s\gamma ^{sNr}\}.$$ By our arrangements, for any word $W$ in this alphabet, we will have $W(P)\in U\cup V$, hence $W(P)\neq P$, hence $W\neq 1$. This implies that $girth(G)\geq r$. Since $r$ is arbitrary, we conclude that $girth(G) = \infty $. 
    \end{proof}

\medskip 

    Now we are ready to discuss the HNN extensions of word hyperbolic groups. 

    \medskip 

    Let again $r\geq 1$ and  $\{g_1, \dots , g_s\}$ be a finite generating set of a word hyperbolic group $G$. Again, we can choose hyperbolic elements $\beta , \gamma $ such that the sets $$ A(\beta ) = ( \displaystyle \mathop{\cup }_{0\leq j\leq 2r}Fix(t^{-j}\beta t^{j}))\cup (\displaystyle \mathop{\cup }_{1\leq i\leq s, 0\leq j, l\leq 2r}t^{-j}g_i^{\pm 1}t^{j}Fix(t^{-l}\beta t^{l})) $$ and $$A(\gamma ) = ( \displaystyle \mathop{\cup }_{0\leq j\leq 2r}Fix(t^{-j}\gamma t^{j}))\cup (\displaystyle \mathop{\cup }_{1\leq i\leq s, 0\leq j, l\leq 2r}t^{-j}g_i^{\pm 1}t^{j}Fix(t^{-l}\gamma t^{l})) $$ are disjoint.

    \medskip 

    By letting $g_0=1$ we can conveniently write $$ A(\beta ) =  \displaystyle \mathop{\cup }_{0\leq i\leq s, 0\leq j, l\leq 2r}t^{-j}g_i^{\pm 1}t^{j}Fix(t^{-l}\beta t^{l}) \ \mathrm{and} \ A(\gamma ) =  \displaystyle \mathop{\cup }_{0\leq i\leq s, 0\leq j, l\leq 2r}t^{-j}g_i^{\pm 1}t^{j}Fix(t^{-l}\gamma t^{l}) .$$  The above arrangement will allow us to write $\beta ^i\gamma ^j$ and $\gamma ^j\beta ^i$ as $\beta ^ig_0\gamma ^j$ and $\gamma ^jg_0\beta ^i$ respectively. 
    
    \medskip 

    We can take open neighborhoods $U, V$ of $\displaystyle \mathop{\cup }_{0\leq j\leq 2r}Fix(t^j\beta t^{-i})$ and $\displaystyle \mathop{\cup }_{0\leq j\leq 2r}Fix(t^j\gamma t^{-i})$ respectively such that  $$(U\cup \displaystyle \mathop{\cup }_{1\leq i\leq s}g_i^{\pm 1}U)\cap (V\cup \displaystyle \mathop{\cup }_{1\leq i\leq s}g_i^{\pm 1}V) = \emptyset $$ and the complement $$\partial G\backslash \overline{(U\cup \displaystyle \mathop{\cup }_{1\leq i\leq s}g_i^{\pm 1}U)\cap (V\cup \displaystyle \mathop{\cup }_{1\leq i\leq s}g_i^{\pm 1}V)}$$ is infinite. Again, taking an arbitrary point $P$ in this complement, we observe that there is no relation of length less than $r$ among the elements of $$S = \{t, \beta ^N\gamma ^N, \gamma ^N(\beta ^N\gamma ^N)^r, \beta ^{Nr}g_1\gamma ^{Nr}, \dots , \beta ^{sNr}g_s\gamma ^{sNr}\}.$$

    Indeed, if $W$ is a such a relation, since $\Gamma $ is a semi-proper or full extension \footnote{In a semi-proper or full HNN extension $(G, G, B, t)$, every element can be written as $t^{p}gt^{-q}$ for some $g\in G$ and non-negative integers $p, q$.}, by Britton's Lemma, the sum of exponents of $t$ in $W$ is equal to zero and a cyclic permutation $W'$ of $W$ or $W^{-1}$ can be written as  $$W' = t^q(t^{-i_1}\beta ^{N_1}t^{i_1})(t^{-j_1}g_{l_1}t^{j_1})(t^{-k_1}\beta ^{M_1}t^{k_1})\dots (t^{-i_m}\beta ^{N_m}t^{i_m})(t^{-j_m}g_{l_m}t^{j_m})(t^{-k_m}\beta ^{M_m}t^{k_m})t^{-q}$$
    where $l_1, \dots , l_m\in \{0, 1, \dots , s\}$, all exponents are non-negative, moreover, the exponents $i_1, j_1, k_1, \dots , i_m, j_m, k_m$ are less than $r$, and the exponents $N_1, M_1, \dots , N_m, M_m$ are bigger than $N$. Then by our arrangements, $W'(p)\neq p$, hence $W'\neq 1$, hence $W\neq 1$. 

    \medskip 
    
    Thus $girth (\Gamma ) \geq r$ and since $r$ is arbitrary, we obtain that  $girth (\Gamma ) = \infty $. This completes the proof of Theorem \ref{thm:free}. 
    
  \bigskip
  
   In the case of free groups of rank at least two (i.e. $G\cong \mathbb{F}_n, n\geq 2$), it is not straightforward to find direct elementary proofs. Much studies have been done in recent years about full or semi-proper HNN extensions of non-abelian free groups, yet the Corollary \ref{thm:free1} does not seem to lend itself easily to these results either.

 \begin{rem} Let us emphasize that free-by-cyclic groups $\mathbb{Z}\ltimes_{\phi}\mathbb{F}_n$ are not always hyperbolic so we cannot invoke the result from \cite{azer2} (Theorem 2.6) about hyperbolic groups. In fact, by the result of P.Brinkmann \cite{Brin} these groups are hyperbolic precisely when they are atoroidal, i.e. when they do not contain an isomorphic copy of $\mathbb{Z}\oplus \mathbb{Z}$. Moreover, for $n=2$, $\mathbb{Z}\ltimes_{\phi}\mathbb{F}_n$ is always toroidal, hence not hyperbolic.  Hyperbolicty does not hold for semi-proper HNN extensions either, in fact,    a result of \cite{Mu} states that a semi-proper HNN extension of $\mathbb{F}_n$ is hyperbolic unless it contains a copy of Baumslag-Solitar group $BS(1,m)$ for $m\geq 1$.
 \end{rem}
 
  \medskip 
  
  \begin{rem} One can hope to invoke Theorem 4.4 from \cite{azer2} but linearity is also a problematic issue.  It is not known whether free-by-cyclic groups are linear, although this is known to be true in the case when the rank of the free group is two \cite{Bu3}. For semi-proper HNN extensions,  the presentation $\langle t, a, b  \ | \ t^{-1}at = a^m, t^{-1}bt = b^r  \rangle $ defines a well-known  example studied in \cite{W} and \cite{DS} which, for $m, r\geq 2$, gives a non-linear group. 
  \end{rem}

\medskip 

 It is known that free-by-cyclic groups are either isomorphic to $BS(1,\pm 1)$, or hyperbolic, or large   \cite{Bu4}) and for semi-proper  HNN extensions of a free group, it has been conjectured by J.Button (in personal communications)  that any such group is either hyperbolic or large, unless it is isomorphic to $BS(1,m), m\in \Z \backslash \{0\}$ (a group is large if it has a finite index subgroup which surjects onto a non-abelian free group). This suggests a use of another result from \cite{azer2}, namely,   Proposition 1.1 (combined with Theorem 2.6 there), but again, we have an issue of not being able to deduce infinity of the girth of a group from the infinity of the girth of its finite index subgroup.  Indeed, we would like to ask the following 
 
 \medskip 
 
 {\bf Question 1:} Do finitely generated large groups have infinite girth?
 
 \medskip

 The following general question regarding full HNN extensions of a given group $G$ is also interesting to us:

\medskip 

\textbf{Question 2:} Let $G$ be a non-cyclic finitely generated group. Is it true that $girth(G)=\infty$ if and only if $girth(\mathbb{Z}\ltimes_{\phi} G)=\infty$.
\medskip

If $girth(G) = \infty $, then, in the case of trivial $\Z$-action (i.e. the direct sum), since $G$ is a quotient of $\mathbb{Z}\ltimes_{\phi} G$, by Proposition 1.1.in \cite{azer2}, we obtain that  $girth(\mathbb{Z}\ltimes_{\phi} G)=\infty$. The case of general $\Z$-action remains unclear. We conjecture that in the other direction the answer is negative, i.e. there exists a finitely generated group $G$ and an automorphism $\phi $ such that $girth(G) < \infty $ whereas $girth(\mathbb{Z}\ltimes_{\phi} G)=\infty$. Let us also mention (a somewhat relevant fact) that if $G$ is a finitely generated group satisfying no law in two variables with $girth(G)<\infty$, then $girth(((G\wr\mathbb{Z})\wr\mathbb{Z}))=\infty$ as shown in \cite{azer1}.

 \end{document}